\newcommand{\Q}{\mathbb{Q}}
\newcommand{\Z}{\mathbb{Z}}
\newcommand{\Co}{\mathbb{C}}
\newcommand{\five}{~~~~~}
\newcommand{\ten}{~~~~~~~~~~}
\newcommand{\twen}{~~~~~~~~~~~~~~~~~~~~}
\newcommand{\mm}[1]{\mathop{\mathrm{#1}}}
\newcommand{\ta}[1]{\tanh^{#1}{(t/2)}}
\newcommand{\dv}[2]{\frac{d^{#1}}{{d#2}^{#1}}}
\newcommand{\apoly}[2]{\mathscr{A}\left(#1;#2\right)}
\newcommand{\ctext}[1]{\raise0.2ex\hbox{\textcircled{\scriptsize{#1}}}}
\newcommand{\cy}{\textup{\foreignlanguage{russian}{\cyrsh}}}
\def\mapstofill@{%
	\arrowfill@{\mapstochar\relbar}\relbar\rightarrow}
\newcommand*\xmapsto[2][]{%
	\ext@arrow 0395\mapstofill@{#1}{#2}}
\newtheorem{theorem}{Theorem}[section]
\newtheorem{lemma}[theorem]{Lemma}
\newtheorem{proposition}[theorem]{Proposition}
\newtheorem{remark}{Remark}[section]
\newtheorem{example}{Example}[section]
\def\underbrace@#1#2{\vtop {\m@th \ialign {##\crcr $\hfil #1{#2}\hfil $\crcr \noalign {\kern 3\p@ \nointerlineskip }\upbracefill \crcr \noalign {\kern 3\p@ }}}}
\def\overbrace@#1#2{\vbox {\m@th \ialign {##\crcr \noalign {\kern 3\p@ }\downbracefill \crcr \noalign {\kern 3\p@ \nointerlineskip }$\hfil #1 {#2}\hfil $\crcr }}}
\def\underbrace#1{%
	\mathop{\mathchoice{\underbrace@{\displaystyle}{#1}}
		{\underbrace@{\textstyle}{#1}}
		{\underbrace@{\scriptstyle}{#1}}
		{\underbrace@{\scriptscriptstyle}{#1}}}\limits
}
\def\overbrace#1{%
	\mathop{\mathchoice{\overbrace@{\displaystyle}{#1}}
		{\overbrace@{\textstyle}{#1}}
		{\overbrace@{\scriptstyle}{#1}}
		{\overbrace@{\scriptscriptstyle}{#1}}}\limits
}
\begin{document}
	
\title{On explicit relations for values of Kaneko-Tsumura's $\lambda$ function}
\author{Kyosuke Nishibiro}
\date{}
\maketitle

\begin{abstract}
	Recently, Kaneko and Tsumura introduced multiple $\widetilde{T}$-values, another kind of poly-Euler numbers and the related Arakawa-Kaneko type zeta function. It is shown that each of them satisfies similar formulas to those of multiple zeta values, poly-Bernoulli numbers and the related Arakawa-Kaneko type zeta function. In this paper, we show some explicit relations for values of Kaneko-Tsumura $\lambda$ function at positive integers, including the duality type relation.
\end{abstract}

\section{Introduction}

First of all, we recall some notations. For an index $\Bbbk=(k_1, \ldots,k_r)\in\Z_{\geq1}^r$, its weight and depth are defined by
\begin{align*}
	\mm{wt}(\Bbbk)=k_1+\cdots+k_r,~~~\mm{dep}(\Bbbk)=r.
\end{align*}
An admissible index is the index with $k_r\geq2$ and a non-admissible index is that with $k_r=1$. For an admissible index $\Bbbk$, we set $\Bbbk_{-}=(k_1,\ldots,k_r-1)$. Also, for $j\in\Z_{\geq1}$ and $l\in\Z_{\geq0}$, we set $\{j\}_{l}=(\underbrace{j, j,\ldots, j}_{l})$. If $l=0$, we regard $\{j\}_0$ as the empty index $\phi$.

In \cite{KT2}, Kaneko and Tsumura introduced multiple $T$-values
\[
T(k_1, \ldots, k_r)=2^r\sum_{\substack{0<m_1<\cdots<m_r\\m_j\equiv j\bmod{2}}}^{}\frac{1}{m_1^{k_1}\cdots m_r^{k_r}}~~~~~~( (k_1,\ldots, k_r)\in\Z_{\geq1}^r, k_r\geq2).
\]
Multiple $T$-values are regarded as level two analogues of multiple zeta values
\[
\zeta(k_1\ldots,k_r)=\sum_{0<m_1<\cdots<m_r}^{}\frac{1}{m_1^{k_1}\cdots m_r^{k_r}}.
\]
Also, a notation of multiple $T$-values is regarded as a `shuffle counterpart' of Hoffman's multiple $t$-values
\[
t(k_1, \ldots, k_r)=2^r\sum_{\substack{0<m_1<\cdots<m_r\\m_j\equiv 1\bmod{2}}}^{}\frac{1}{m_1^{k_1}\cdots m_r^{k_r}}.
\]
For each value, it is known that each vector space spanned by each value over $\Q$ has interesting properties (for details, see \cite{AK, KT2, Hof1}). On the other hand, there are much unknown on them, including dimension conjecture for each vector space. Hence finding relations for each values is important. One method is to consider Arakawa-Kaneko type zeta functions defined by
\begin{align}\label{defxi}
	\xi(k_1,\ldots, k_r;s)=\frac{1}{\Gamma(s)}\int_{0}^{\infty}t^{s-1}\frac{\mm{Li}(k_1,\ldots, k_r;1-e^{-t})}{e^t-1} dt~(\mm{Re}(s)>0)
\end{align}  
and 
\begin{align}\label{defpsi}
	\psi(k_1,\ldots, k_r;s)=\frac{1}{\Gamma(s)}\int_{0}^{\infty}t^{s-1}\frac{\mm{A}(k_1,\ldots,k_r; \ta{})}{\sinh{t}} dt~(\mm{Re}(s)>0)
\end{align}
respectively. Here, 
\begin{align*}
	\mm{Li}(k_1,\ldots, k_r;z)=\sum_{0<m_1<\cdots<m_r}^{}\frac{z^{m_r}}{m_1^{k_1}\cdots m_r^{k_r}}~(|z|<1)
\end{align*}
is the multiple polylogarithm function and
\begin{align*}
	\mm{A}(k_1,\ldots,k_r;z)=2^r\sum_{\substack{0<m_1<\cdots<m_r\\m_j\equiv j\bmod{2}}}^{}\frac{z^{m_r}}{m_1^{k_1}\cdots m_r^{k_r}}~(|z|<1)
\end{align*}	
is the multiple polylogarithm function of level two. Note that
\begin{align*}
	\mm{Li}(1;z)=-\log(1-z),~~~ \mm{A}(1;z)=2\tanh^{-1}{z},
\end{align*}	
and
\begin{align*}
	\mm{Li}(1;1-e^{-t})=t,~~~ \mm{A}(1;\ta{})=t
\end{align*}
hold. In particular, we have
\[
\xi(1;s)=s\zeta(s+1),~~\psi(1;s)=(1-2^{-s})\zeta(s).
\]
It is known that the $\xi$ and $\psi$ functions can be continued analytically to $\Co$ and the values of $\xi$ and $\psi$ functions at positive integers are related to multiple zeta values and multiple $T$-values, respectively (for details, see \cite{AK, KT2, KT3}).

Recently, Kaneko and Tsumura introduced multiple $\widetilde{T}$ values and related zeta function
\begin{align*}
	\widetilde{T}(k_1, \ldots, k_r)=2^r\sum_{\substack{0<m_1<\cdots<m_r\\m_j\equiv j\bmod{2}}}^{}\frac{(-1)^{(m_r-r)/2}}{m_1^{k_1}\cdots m_r^{k_r}}
\end{align*}
and
\begin{align}\label{lambda}
	\lambda(k_1,\ldots, k_r;s)=\frac{1}{\Gamma(s)}\int_{0}^{\infty}t^{s-1}\frac{\apoly{k_1,\ldots,k_r}{\tanh{(t/2+\pi i/4)}}}{\cosh{t}} dt
	\end{align}
	in \cite{KT1}. Here, for $\Bbbk_r=(k_1,\ldots,k_r)\in\Z_{\geq0}^r$, $\apoly{k_1,\ldots, k_r}{z}$ is the multiple polylogarithm function of level four defined by 
	\[
	\apoly{k_1,\ldots,k_{r-1},k_r}{z}= \begin{cases}
		\int_{i}^{z}\frac{1}{u}\apoly{k_1,\ldots,k_{r-1},k_r-1}{u}du &(\Bbbk_r:\mbox{admissible}) ,\\
		\int_{i}^{z}\frac{2}{1-u^2}\apoly{k_1,\ldots,k_{r-1}}{u}du &(\Bbbk_r:\mbox{non-admissible})\end{cases}
	\]
    (from this, we can show that the shuffle product holds for $\apoly{\Bbbk_r}{z}$). In particular, we have
    \begin{align}\label{apolya}
    \apoly{1}{z}=\mm{A}(1;z)-\frac{\pi i}{2}=\tanh^{-1}{\left(\frac{z}{2}+\frac{\pi i}{4}\right)}.
    \end{align}
    Note that multiple $\widetilde{T}$-values are related to certain multiple $L$-values. Let $\chi_4$ be the Dirichlet character of conductor four. Then multiple $\widetilde{T}$-values can be written by multiple $L$-values as
    \[
    \widetilde{T}(k_1, \ldots, k_r)=2^rL_{\cy}(k_1,\ldots,k_r;\chi_4,\ldots,\chi_4).
    \]
    (for details of multiple $L$ values, see \cite{AK2}). Hence multiple $\widetilde{T}$-values are also called multiple $L$-values of level four. In \cite{KT1}, some relations between $\lambda$ function and multiple $\widetilde{T}$-values are showed. In this paper, we show other relations on them. In section $2$, we show some explicit formulas for values of $\lambda$ function. In section $3$, we show duality type relation for $\lambda$ function.

\section{Explicit formulas for values of Kaneko-Tsumura's $\lambda$ function}

In this section, we show some explicit formulas for values of $\lambda$ function and multiple $\widetilde{T}$-values. First of all, we show that the values of $\lambda$ function can be written in terms of multiple $\widetilde{T}$-values. For $\xi$ function and $\psi$ function, the following theorem is known. Note that, for the empty index $\phi$, we regard $\mm{wt}(\phi)=0$ and the constant $1$ as the value of weight 0.

\begin{theorem}[{\cite{KT2, PX}}]
	For $\Bbbk=(k_1,\ldots,k_r)\in\Z_{\geq1}^r$, we have
	\begin{align*}
	\xi(\Bbbk;s)&=\sum_{{\Bbbk}^{\prime}, j\geq0}{} C_{\Bbbk}({\Bbbk}^{\prime};j)\binom{s+j-1}{j}\zeta({\Bbbk}^{\prime},s+j),\\
	\psi(\Bbbk;s)&=\sum_{{\Bbbk}^{\prime}, j\geq0}{} C^{\prime}_{\Bbbk}({\Bbbk}^{\prime};j)\binom{s+j-1}{j}T({\Bbbk}^{\prime},s+j).\\
	\end{align*}
	Here, the sum runs over all indices $\Bbbk^{\prime}$ and $j\in\Z_{\geq0}$ with $\mm{wt}(\Bbbk^{\prime})+j\leq\mm{wt}(\Bbbk)$, and $C_{\Bbbk}({\Bbbk}^{\prime};j), C^{\prime}_{\Bbbk}({\Bbbk}^{\prime};j)$ are $\Q$-linear combination of multiple zeta values and multiple $T$-values of weight $\mm{wt}(\Bbbk)-\mm{wt}({\Bbbk}^{\prime})-j$, respectively.
\end{theorem}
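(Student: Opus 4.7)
The plan is to prove both identities in parallel by induction on $\mm{wt}(\Bbbk)$, since $\xi$ and $\psi$ have parallel integral representations (with $1-e^{-t}$, $e^t-1$ on one side and $\tanh(t/2)$, $\sinh t$ on the other). I will sketch the $\xi$ case; the $\psi$ case runs verbatim under the replacements $\mm{Li} \leftrightarrow \mm{A}$ and $\zeta \leftrightarrow T$. For the base case $\Bbbk = (1)$, the identity $\mm{Li}(1; 1-e^{-t}) = t$ combined with the elementary Mellin transform $\frac{1}{\Gamma(s)}\int_0^\infty t^s/(e^t-1)\,dt = s\zeta(s+1)$ yields the claim directly with $\Bbbk'=\phi$, $j=1$, $C_{(1)}(\phi;1)=1$.

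For the inductive step, the key observation is that the elementary Mellin transform
\[
\frac{1}{\Gamma(s)}\int_0^\infty t^{s+j-1}e^{-mt}\,dt = \frac{(s)_j}{m^{s+j}} = \frac{j!\,\binom{s+j-1}{j}}{m^{s+j}}
\]
is precisely the engine producing both the binomial coefficient $\binom{s+j-1}{j}$ and the shifted argument $s+j$ of $\zeta$ in the target formula. So the task reduces to producing an expansion of the form
\[
\frac{\mm{Li}(\Bbbk; 1-e^{-t})}{e^t-1} = \sum_{\Bbbk', j}\frac{C_\Bbbk(\Bbbk'; j)}{j!}\,t^j \sum_{0 < m_1 < \cdots < m_{r'} < m}\frac{e^{-mt}}{m_1^{k_1'}\cdots m_{r'}^{k_{r'}'}}
\]
and then integrating term by term against $t^{s-1}/\Gamma(s)$. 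Such an expansion can be extracted by iterating the derivative identities
\[
\frac{d}{dt}\mm{Li}(\Bbbk; 1-e^{-t}) = \begin{cases} \dfrac{e^{-t}}{1-e^{-t}}\mm{Li}(\Bbbk_-; 1-e^{-t}) & (k_r \geq 2),\\[0.2em] \mm{Li}(k_1,\ldots,k_{r-1}; 1-e^{-t}) & (k_r = 1), \end{cases}
\]
together with the geometric series $1/(e^t-1)=\sum_{n\geq1}e^{-nt}$; each iteration trades one unit of weight in $\Bbbk$ for either a decrement of $k_r$ or (in the non-admissible case) a boundary contribution that, by the inductive hypothesis, is itself an MZV of the appropriate weight.

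The main obstacle, as I see it, is weight bookkeeping: one must verify that the $C_\Bbbk(\Bbbk';j)$ are homogeneous of weight exactly $\mm{wt}(\Bbbk)-\mm{wt}(\Bbbk')-j$. Every inductive step trades one unit of weight in $\Bbbk$ for one unit absorbed either into $\Bbbk'$, into the power $t^j$, or into an MZV coefficient, so no unit of weight is spontaneously created or destroyed; but one has to check this grading survives every rearrangement of the series. The non-admissible case $k_r = 1$ is the most delicate, since $\mm{Li}(\Bbbk; 1-e^{-t})$ no longer tends to a convergent MZV as $t \to \infty$, so one must regularize, for instance via the shuffle product of multiple polylogarithms to reduce indices ending in $1$ to admissible ones; this step preserves weight but requires some care to carry out cleanly before the termwise Mellin transform can be invoked.
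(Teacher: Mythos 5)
The paper does not actually prove this theorem (it is quoted from the sources cited in its statement), but its proof of the $\lambda$-analogue, Theorem \ref{theoremexp} via Lemmas \ref{inttilt} and \ref{expliapoly}, follows exactly the strategy you describe: an induction-on-weight expansion of the polylogarithm into $t^j/j!$ times lower-weight polylogarithms with constant coefficients, shuffle regularization for non-admissible tails, and a termwise Mellin transform producing $\binom{s+j-1}{j}\zeta(\Bbbk',s+j)$, so your plan is essentially the paper's approach and is correct in outline. The one step you compress is precisely what the paper makes explicit in Lemma \ref{expliapoly}: the derivative identities you list stay entirely within the family $\mm{Li}(\,\cdot\,;1-e^{-t})$, and producing the factors $\mm{Li}(\Bbbk';e^{-t})$ (which is what yields $\zeta(\Bbbk',s+j)$ after the Mellin transform, by the analogue of Lemma \ref{inttilt}) together with the constant coefficients $C_{\Bbbk}(\Bbbk';j)$ requires an integration-by-parts pairing of the two families, with the constants arising as boundary values at the basepoint of integration in the \emph{admissible} case — not, as you suggest, only in the non-admissible one, which is instead where the shuffle regularization is needed.
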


A similar theorem holds for $\lambda$ function and multiple $\widetilde{T}$-values.

\begin{theorem}\label{theoremexp}
	For $\Bbbk=(k_1,\ldots,k_r)\in\left(\Z_{\geq1}\right)^r$, we have
	\[
	\lambda(\Bbbk;s)=i^{\mm{wt}(\Bbbk)-\mm{dep}(\Bbbk)}\sum_{{\Bbbk}^{\prime}, j\geq0}{} \widetilde{C}_{\Bbbk}({\Bbbk}^{\prime};j)\binom{s+j-1}{j}\widetilde{T}({\Bbbk}^{\prime},s+j).
	\]
	Here, the sum runs over all indices $\Bbbk^{\prime}$ and $j\in\Z_{\geq0}$ with $\mm{wt}(\Bbbk^{\prime})+j\leq\mm{wt}(\Bbbk)$, and $\widetilde{C}_{\Bbbk}({\Bbbk}^{\prime};j)$ is $\Q$-linear combination of multiple $\widetilde{T}$-values of weight $\mm{wt}(\Bbbk)-\mm{wt}({\Bbbk}^{\prime})-j$.
\end{theorem}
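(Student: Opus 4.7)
The plan is to reduce the theorem to the functional expansion
\[
\apoly{\Bbbk}{z(t)} = \sum_{\Bbbk',\, j\geq 0} b_\Bbbk(\Bbbk';j)\,\frac{t^j}{j!}\,\mm{A}(\Bbbk';ie^{-t}),\qquad z(t):=\tanh\bigl(t/2+\pi i/4\bigr),
\]
where, up to a fixed power of $i$, each $b_\Bbbk(\Bbbk';j)$ is a $\Q$-linear combination of multiple $\widetilde{T}$-values of weight $\mm{wt}(\Bbbk)-\mm{wt}(\Bbbk')-j$. The theorem then follows from the Mellin-type identity
\[
\widetilde{T}(\Bbbk',k) = \frac{(-i)^{\mm{dep}(\Bbbk')}}{\Gamma(k)}\int_0^\infty t^{k-1}\,\frac{\mm{A}(\Bbbk';ie^{-t})}{\cosh t}\,dt,
\]
which one establishes by expanding $\mm{A}(\Bbbk';ie^{-t})$ and $1/\cosh t = 2\sum_m(-1)^m e^{-(2m+1)t}$ as Dirichlet series in $e^{-nt}$ and comparing their convolution to the defining series of $\widetilde{T}(\Bbbk',k)$; multiplying the expansion by $t^{s-1}/(\Gamma(s)\cosh t)$ and integrating then produces the theorem, with $\binom{s+j-1}{j}=\Gamma(s+j)/(j!\,\Gamma(s))$ emerging naturally.

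The expansion is proved by induction on $\mm{wt}(\Bbbk)$. The base case $\Bbbk=(1)$ is immediate from $\apoly{1}{z(t)}=t=t\cdot\mm{A}(\phi;ie^{-t})$. For the inductive step, the substitution $u=z(\tau)$ sends $du/u\mapsto -i\,d\tau/\cosh\tau$ and $2du/(1-u^2)\mapsto d\tau$ (with $z(0)=i$), giving
\[
\apoly{\Bbbk}{z(t)} = \begin{cases}\displaystyle -i\int_0^t \apoly{\Bbbk_-}{z(\tau)}/\cosh\tau\,d\tau & (k_r\geq 2),\\[0.2em]\displaystyle \int_0^t \apoly{k_1,\ldots,k_{r-1}}{z(\tau)}\,d\tau & (k_r=1).\end{cases}
\]
Inserting the inductive expansion into the integrand and applying the elementary identity $\int_0^t \tau^{j'}e^{-n\tau}\,d\tau = j'!/n^{j'+1}-j'!\,e^{-nt}\sum_{k=0}^{j'}t^k/(k!\,n^{j'+1-k})$, the constant-in-$t$ boundary piece collapses to $\widetilde{T}$-values through the auxiliary identity $\mm{A}(\Bbbk';i)=i^{\mm{dep}(\Bbbk')}\widetilde{T}(\Bbbk')$ for admissible $\Bbbk'$ (a consequence of $i^{m_{r'}}=i^{r'}(-1)^{(m_{r'}-r')/2}$ for $m_{r'}\equiv r'\pmod 2$) together with the Mellin identity above, while the $e^{-nt}$ pieces reassemble into shifted $\mm{A}$-terms via the identity $\sum_n\gamma_n e^{-nt}/n^a=-i\,\mm{A}((\Bbbk',a);ie^{-t})$ (where $\mm{A}(\Bbbk';ie^{-\tau})/\cosh\tau=\sum_n\gamma_n e^{-n\tau}$), itself obtained by rewriting the iterated-integral definition of $\mm{A}$ under $u=ie^{-\tau}$.

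The main obstacle is the combinatorial bookkeeping of the coefficients $\widetilde{C}_\Bbbk(\Bbbk';j)$ through the induction, most notably in the case $k_r\geq 2$, where the extra $1/\cosh\tau$ forces one to convolve two Dirichlet series before integrating. At each step the new coefficient is a sum of products of old coefficients and $\widetilde{T}$-values arising from the boundary, and a careful weight count shows that each $\widetilde{C}_\Bbbk(\Bbbk';j)$ is a $\Q$-linear combination of multiple $\widetilde{T}$-values of weight exactly $\mm{wt}(\Bbbk)-\mm{wt}(\Bbbk')-j$. Correctly collecting the accumulated $\pm i$ factors through both cases reproduces the overall prefactor $i^{\mm{wt}(\Bbbk)-\mm{dep}(\Bbbk)}$ asserted in the theorem.
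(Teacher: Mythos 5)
Your proposal is correct, and its overall skeleton coincides with the paper's: both reduce the theorem to (i) a Mellin-type lemma identifying $\frac{1}{\Gamma(s)}\int_0^\infty t^{s-1}\frac{\mm{A}(\Bbbk';ie^{-t})}{\cosh t}\,dt$ with $i^{\mm{dep}(\Bbbk')}\widetilde{T}(\Bbbk',s)$ (your identity is exactly Lemma \ref{inttilt}, proved the same way), and (ii) an expansion of $\apoly{\Bbbk}{\tanh(t/2+\pi i/4)}$ as a sum of terms $\frac{t^j}{j!}\mm{A}(\Bbbk';ie^{-t})$ with $\widetilde{T}$-value coefficients, established by induction on the weight by peeling off the last entry of the index. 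Where you genuinely diverge is in the execution of (ii). The paper proves the stronger $z$-variable statement (Lemma \ref{expliapoly}), writing $\apoly{\Bbbk}{\frac{1+z}{1-z}}$ in terms of $\apoly{\{1\}_j}{\frac{1+z}{1-z}}\mm{A}(\Bbbk';z)$, and treats the admissible case by integration by parts and the non-admissible case by the shuffle/regularization identity together with Lemma \ref{1apoly}. You instead pass at once to the $t$-variable via $u=\tanh(\tau/2+\pi i/4)$ (which does convert $du/u$ into $-i\,d\tau/\cosh\tau$ and $2du/(1-u^2)$ into $d\tau$, with base point $\tau=0$), expand the integrand as a Dirichlet series in $e^{-\tau}$, and integrate term by term; the constant pieces become $\widetilde{T}$-values via $\mm{A}(\Bbbk'';i)=i^{\mm{dep}(\Bbbk'')}\widetilde{T}(\Bbbk'')$ and the tails reassemble into $\frac{t^k}{k!}\mm{A}(\cdot\,;ie^{-t})$. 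This buys a uniform treatment of both cases with no appeal to the shuffle product or to regularization, at the price of heavier series bookkeeping: you should justify the interchange of summation and integration, note that the boundary values $\widetilde{T}(\ldots,1)$ are only conditionally convergent (an issue the paper's proof shares), and make explicit that the $1/\cosh\tau$ case \emph{appends} a new entry to $\Bbbk'$ while the $k_r=1$ case merely \emph{bumps} the last entry, which your single phrase ``shifted $\mm{A}$-terms'' glosses over. With those points spelled out, the weight count and the accumulation of $i$-powers close up under the induction as you claim, and I see no gap.
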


To show Theorem \ref{theoremexp}, we need the following lemmas.

\begin{lemma}\label{1apoly}\cite[pp.24]{KT1}
	For $r\in\Z_{\geq1}$, we have
	\[
	\apoly{\{1\}_r}{z}=\frac{1}{r!}\left(\apoly{1}{z}\right)^r.
	\]
\end{lemma}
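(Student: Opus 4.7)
The plan is to proceed by induction on $r$. The case $r=1$ is a tautology. For the inductive step, observe that since the index $\{1\}_r$ ends in $1$ it is non-admissible, so the defining recurrence gives
\[
\apoly{\{1\}_r}{z} \;=\; \int_{i}^{z}\frac{2}{1-u^2}\,\apoly{\{1\}_{r-1}}{u}\,du,
\]
into which I would substitute the inductive hypothesis $\apoly{\{1\}_{r-1}}{u}=\frac{1}{(r-1)!}\bigl(\apoly{1}{u}\bigr)^{r-1}$.

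The key observation is that the very same non-admissible recurrence applied at depth one (with the convention that the empty-index level-four polylogarithm equals $1$) yields $\apoly{1}{u}=\int_{i}^{u}\frac{2}{1-v^2}\,dv$, so $\frac{d}{du}\apoly{1}{u}=\frac{2}{1-u^2}$ and $\apoly{1}{i}=0$. With this, the integrand becomes $\frac{1}{(r-1)!}\bigl(\apoly{1}{u}\bigr)^{r-1}\frac{d}{du}\apoly{1}{u}$, whose antiderivative is $\frac{1}{r!}\bigl(\apoly{1}{u}\bigr)^r$ by the power rule. Evaluating at $u=z$ and $u=i$ and using $\apoly{1}{i}=0$ then closes the induction.

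I do not foresee any significant obstacle here: the argument is a straightforward induction powered by the chain rule, and parallels the classical identity for the ordinary polylogarithm at the all-ones index, namely $\mm{Li}(\{1\}_r;z)=\frac{1}{r!}(-\log(1-z))^r$. The only point deserving care is to pin down the initial conventions $\apoly{\phi}{u}=1$ and $\apoly{1}{i}=0$, both of which are immediate from the integral definition given in the introduction.
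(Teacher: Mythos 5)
Your proof is correct and is the standard argument: since $\{1\}_r$ is non-admissible, the defining recurrence together with $\frac{d}{du}\apoly{1}{u}=\frac{2}{1-u^2}$ and $\apoly{1}{i}=0$ closes the induction by the power rule. Note that the paper itself gives no proof of this lemma, citing it to Kaneko--Tsumura \cite[pp.~24]{KT1}; your induction is precisely the expected one-line verification, the level-four analogue of $\mm{Li}(\{1\}_r;z)=\frac{1}{r!}(-\log(1-z))^r$.
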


\begin{lemma}\label{inttilt}
	For $r\in\Z_{\geq2}$, $\Bbbk=(k_1,\ldots,k_{r-1})\in\Z_{\geq1}^r$ and $s\in\Co$ with $\mm{Re}(s)>1$, we have
	\[
	i^{r-1}\widetilde{T}(\Bbbk,s)=\frac{1}{\Gamma(s)}\int_{0}^{\infty} t^{s-1}\frac{\mm{A}(\Bbbk;ie^{-t})}{\cosh{t}} dt.
	\]
\end{lemma}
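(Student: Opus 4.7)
The plan is to expand both $\mm{A}(\Bbbk;ie^{-t})$ and $1/\cosh t$ as Dirichlet-type series in $e^{-t}$, multiply them, and then apply the standard gamma-integral identity $\int_0^\infty t^{s-1} e^{-mt}\,dt = \Gamma(s)/m^s$ term by term. More precisely, I would first write
\[
\frac{1}{\cosh t}=\frac{2}{e^t+e^{-t}}=2\sum_{n=0}^{\infty}(-1)^n e^{-(2n+1)t},
\]
and expand the level-four polylog factor from the definition of $\mm{A}$ as
\[
\mm{A}(\Bbbk;ie^{-t})=2^{r-1}\!\!\sum_{\substack{0<m_1<\cdots<m_{r-1}\\m_j\equiv j\bmod 2}}\frac{i^{m_{r-1}}e^{-m_{r-1}t}}{m_1^{k_1}\cdots m_{r-1}^{k_{r-1}}}.
\]

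Next I would take the product of these two series and reindex the inner convolution by setting $m_r:=m_{r-1}+(2n+1)$, so that $m_r>m_{r-1}$ and $m_r\equiv r\bmod 2$ automatically, while $n$ ranges over $\Z_{\geq 0}$. The resulting double sum is a single nested sum over admissible parities $m_1<\cdots<m_r$. The only bookkeeping to do is the sign and phase: since $m_{r-1}\equiv r-1\bmod 2$, one has $i^{m_{r-1}}=i^{r-1}(-1)^{(m_{r-1}-(r-1))/2}$, and combining this with the factor $(-1)^n$ coming from $1/\cosh t$ together with $n=(m_r-m_{r-1}-1)/2$ produces exactly $i^{r-1}(-1)^{(m_r-r)/2}$, matching the sign in the definition of $\widetilde{T}$.

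Then I would multiply by $t^{s-1}/\Gamma(s)$, swap the integral with the (absolutely convergent, since $\mm{Re}(s)>1$ and the series decays like $e^{-m_r t}$) sum, and use $\int_0^\infty t^{s-1} e^{-m_r t}\,dt=\Gamma(s)/m_r^s$ to obtain
\[
\frac{1}{\Gamma(s)}\int_0^\infty t^{s-1}\frac{\mm{A}(\Bbbk;ie^{-t})}{\cosh t}\,dt
=i^{r-1}\cdot 2^r\!\!\sum_{\substack{0<m_1<\cdots<m_r\\m_j\equiv j\bmod 2}}\frac{(-1)^{(m_r-r)/2}}{m_1^{k_1}\cdots m_{r-1}^{k_{r-1}}m_r^s},
\]
which is precisely $i^{r-1}\widetilde{T}(\Bbbk,s)$.

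The only genuine issue is a careful tracking of the sign/phase identity, and the interchange of sum and integral, which is routine by Fubini once one bounds the series absolutely using $\mm{Re}(s)>1$ (the extra $e^{-m_r t}$ decay makes everything integrable, and summability follows from $\mm{Re}(s)>1$ together with the convergence of $\widetilde T$ at the corresponding weight). I do not expect any essential obstacle beyond this accounting.
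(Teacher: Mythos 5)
Your proof is correct and is essentially the same computation as the paper's, just run in the opposite direction: the paper starts from $\widetilde{T}(\Bbbk,s)$, inserts the Gamma integral for $1/m_r^s$, and sums the inner tail over $m_r$ in closed form to produce the factor $1/\cosh t$, whereas you expand $1/\cosh t$ as the geometric series $2\sum_{n\geq0}(-1)^n e^{-(2n+1)t}$ and convolve; the key phase identity $i^{m_r}=i^{r}(-1)^{(m_r-r)/2}$ for $m_r\equiv r\bmod 2$ and the Fubini step are the same in both. No gap.
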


\begin{proof}
    By definition of multiple $\widetilde{T}$-values and $\frac{1}{\Gamma(s)}\int_{0}^{\infty} t^{s-1}e^{-nt}=\frac{1}{n^s}$, we have
     \begin{align*}
    	\widetilde{T}({\Bbbk}_{r-1},s)
    	&=2^r\sum_{\substack{0<m_1<\cdots<m_{r-1}\\ m_j\equiv j \bmod{2}}}^{} \frac{1}{m_1^{k_1}\cdots m_{r-1}^{k_{r-1}}} \sum_{\substack{m_r=m_{r-1}+1\\ m_r\equiv r\bmod{2}}}^{\infty} 
    	\frac{(-1)^{(m_r-r)/2}}{m_r^s}\\
    	&=\frac{2^r}{\Gamma(s)}\int_{0}^{\infty} t^{s-1} \sum_{\substack{0<m_1<\cdots<m_{r-1}\\ m_j\equiv j \bmod{2}}}^{} \frac{1}{m_1^{k_1}\cdots m_{r-1}^{k_{r-1}}} \sum_{\substack{m_r=m_{r-1}+1\\ m_r\equiv r\bmod{2}}}^{\infty} (-1)^{(m_r-r)/2}e^{-m_rt} dt\\
    	&=\frac{2^ri^{-r}}{\Gamma(s)}\int_{0}^{\infty} t^{s-1} \sum_{\substack{0<m_1<\cdots<m_{r-1}\\ m_j\equiv j \bmod{2}}}^{} \frac{1}{m_1^{k_1}\cdots m_{r-1}^{k_{r-1}}} \sum_{\substack{m_r=m_{r-1}+1\\ m_r\equiv r\bmod{2}}}^{\infty} (ie^{-t})^{m_r} dt\\
    	&=\frac{2^{r-1}i^{-r+1}}{\Gamma(s)}\int_{0}^{\infty} t^{s-1} \sum_{\substack{0<m_1<\cdots<m_{r-1}\\ m_j\equiv j \bmod{2}}}^{} \frac{1}{m_1^{k_1}\cdots m_{r-1}^{k_{r-1}}}\frac{2(ie^{-t})^{m_{r-1}}}{e^t+e^{-t}}dt\\
    	&=\frac{i^{-r+1}}{\Gamma(s)}\int_{0}^{\infty} t^{s-1} \frac{\mm{A}(\Bbbk;ie^{-t})}{\cosh{t}} dt.
    \end{align*}
    Hence we obtain Lemma \ref{inttilt}.
\end{proof}

\begin{lemma}\label{expliapoly}
	For $\Bbbk=(k_1,\ldots,k_r)\in\Z_{\geq1}^r$, we have
	\[
	\apoly{\Bbbk}{\frac{1+z}{1-z}}=\sum_{{\Bbbk}^{\prime}, j\geq0}{} i^{\mm{wt}(\Bbbk)-\mm{dep}(\Bbbk)-\mm{dep}(\Bbbk^{\prime})}\widetilde{C}_{\Bbbk}({\Bbbk}^{\prime};j)\apoly{\{1\}_j}{\frac{1+z}{1-z}}\mm{A}\left({\Bbbk}^{\prime};z\right).
	\]
	Here, the sum runs over all indices $\Bbbk^{\prime}$ and $j\in\Z_{\geq0}$ with $\mm{wt}(\Bbbk^{\prime})+j\leq\mm{wt}(\Bbbk)$, and $\widetilde{C}_{\Bbbk}({\Bbbk}^{\prime};j)$ is the same as Theorem \ref{theoremexp}. Also, we regard $\apoly{\phi}{z}=\mm{A}(\phi;z)=1$.
\end{lemma}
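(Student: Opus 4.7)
I would prove this by induction on $\mm{wt}(\Bbbk)$. The base case $\Bbbk = \phi$ is immediate with $\widetilde{C}_\phi(\phi; 0) = 1$. For the inductive step I split into the cases $k_r \geq 2$ and $k_r = 1$ via the recursive definition of $\apoly{\Bbbk}{}$. A direct computation shows that under the substitution $u = (1+v)/(1-v)$, which sends $u = i$ to $v = i$, one has $du/u = 2\,dv/(1-v^2)$ and $2\,du/(1-u^2) = -dv/v$. Hence
\[
\apoly{\Bbbk}{\tfrac{1+z}{1-z}} =
\begin{cases}
\displaystyle \int_i^z \frac{2}{1-v^2}\,\apoly{\Bbbk_-}{\tfrac{1+v}{1-v}}\,dv & (k_r \geq 2),\\[6pt]
\displaystyle -\int_i^z \frac{1}{v}\,\apoly{(k_1,\ldots,k_{r-1})}{\tfrac{1+v}{1-v}}\,dv & (k_r = 1).
\end{cases}
\]
Applying the induction hypothesis to the inner polylogarithm reduces everything to evaluating, for an arbitrary (possibly empty) index $\Bbbk''$ and $j \geq 0$, the two model integrals
\[
J_j(\Bbbk'') := \int_i^z \frac{F_j(v)\,\mm{A}(\Bbbk''; v)}{v}\,dv,\qquad
K_j(\Bbbk'') := \int_i^z \frac{2\,F_j(v)\,\mm{A}(\Bbbk''; v)}{1-v^2}\,dv,
\]
where $F_j(v) := \apoly{\{1\}_j}{(1+v)/(1-v)}$.

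\textbf{Key integrations by parts.} Two elementary identities drive the reduction: $F_j'(v) = -F_{j-1}(v)/v$ (from $\frac{d}{du}\apoly{\{1\}_j}{u} = \frac{2}{1-u^2}\apoly{\{1\}_{j-1}}{u}$ combined with the chain rule), and $\int_i^z F_j(v)/v\,dv = -F_{j+1}(z)$ (by applying the inverse substitution to the non-admissible step of the definition of $\apoly{}{}$). Using $F_j(i) = 0$ for $j \geq 1$, integration by parts in $J_j$ with $U = F_j$ and $V(v) = \mm{A}(\Bbbk''_+;v) - \mm{A}(\Bbbk''_+;i)$ (where $\Bbbk''_+$ is $\Bbbk''$ with its last entry incremented) produces a clean cancellation of the boundary correction, yielding
\[
J_j(\Bbbk'') = F_j(z)\,\mm{A}(\Bbbk''_+; z) + J_{j-1}(\Bbbk''_+)\quad (j \geq 1,\ \Bbbk'' \neq \phi),
\]
and analogously $K_j(\Bbbk'') = F_j(z)\,\mm{A}((\Bbbk'', 1); z) + J_{j-1}((\Bbbk'', 1))$. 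The base cases are $J_0(\Bbbk'') = \mm{A}(\Bbbk''_+; z) - \mm{A}(\Bbbk''_+; i)$ for $\Bbbk'' \neq \phi$, and the degenerate $J_j(\phi) = -F_{j+1}(z)$. Iterating these recursions expresses both $J_j$ and $K_j$ as a finite sum of terms $F_m(z)\,\mm{A}(\widetilde{\Bbbk}; z)$ together with a single constant boundary term $\mm{A}(\widetilde{\Bbbk}; i)$.

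\textbf{Conversion and bookkeeping.} A direct computation from the defining series of $\mm{A}$ and $\widetilde{T}$ yields
\[
\mm{A}(\widetilde{\Bbbk}; i) = i^{\mm{dep}(\widetilde{\Bbbk})}\,\widetilde{T}(\widetilde{\Bbbk}),
\]
using $i^{m_r} = i^r(-1)^{(m_r-r)/2}$ when $m_r \equiv r \pmod 2$. Substituting this back, every boundary contribution becomes a multiple $\widetilde{T}$-value; the resulting products with the inductive $\widetilde{T}$-coefficients $\widetilde{C}_{\Bbbk_-}$ or $\widetilde{C}_{\overline{\Bbbk}}$ can then be linearised via the shuffle product for $\widetilde{T}$-values (inherited from the shuffle product for $\mathscr{A}$). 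It remains a direct tally to verify the two decorations: the exponent of $i$ in each new term equals $\mm{wt}(\Bbbk) - \mm{dep}(\Bbbk) - \mm{dep}(\Bbbk')$, and the weight of the new $\widetilde{T}$-coefficient of $F_j(z)\,\mm{A}(\Bbbk'; z)$ equals $\mm{wt}(\Bbbk) - \mm{wt}(\Bbbk') - j$. The main obstacle is precisely this bookkeeping, along with the need to handle the degenerate case $\Bbbk'' = \phi$ (where $\Bbbk''_+$ is undefined) via $J_j(\phi) = -F_{j+1}(z)$; the decisive structural fact that makes the whole argument go through is the boundary-term cancellation in the integration by parts, which keeps the expansion closed within the prescribed form at each iteration.
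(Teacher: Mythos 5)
Your proposal is correct, and in the admissible case ($k_r\geq 2$) it is essentially the paper's argument in different clothing: the paper also works with $\apoly{\Bbbk}{\frac{1+z}{1-z}}=\int_i^z\frac{2}{1-u^2}\apoly{\Bbbk_-}{\frac{1+u}{1-u}}du$, applies the induction hypothesis, and integrates by parts so that the $\apoly{\{1\}_j}{\cdot}$ factor is traded for an increment of the last entry of the $\mm{A}$-index, with the boundary term at $u=i$ producing $i^{\mm{dep}}\widetilde{T}(\cdot)$; your telescoped recursion $K_j(\Bbbk'')=\sum_{l=0}^{j}F_{j-l}(z)\mm{A}((\Bbbk'',l+1);z)-\mm{A}((\Bbbk'',j+1);i)$ reproduces the paper's displayed formula exactly. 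The genuine divergence is in the non-admissible case $k_r=1$. The paper invokes the shuffle regularization $\apoly{\Bbbk_{r-1}}{z}\apoly{1}{z}=\apoly{\Bbbk_r}{z}+\sum\apoly{\widetilde{\Bbbk}_{r-1,l}}{z}$ together with Lemma \ref{1apoly}, and then applies the induction hypothesis both to $\Bbbk_{r-1}$ and to the correction indices $\widetilde{\Bbbk}_{r-1,l}$ --- note these have the \emph{same} weight as $\Bbbk_r$, so the paper's induction on weight alone needs a secondary parameter (e.g.\ the number of trailing $1$'s) to be airtight. You instead use the same M\"obius substitution to write $\apoly{\Bbbk}{\frac{1+z}{1-z}}=-\int_i^z\frac{1}{v}\apoly{(k_1,\ldots,k_{r-1})}{\frac{1+v}{1-v}}dv$ and evaluate the resulting $J_j$-integrals by the identical integration by parts; this only ever calls the hypothesis at strictly smaller weight, avoids the shuffle decomposition entirely, and handles the degenerate $\Bbbk''=\phi$ case cleanly via $J_j(\phi)=-F_{j+1}(z)$. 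Your version is therefore slightly more uniform and repairs the induction bookkeeping; what it costs is that you must still invoke the shuffle product once at the end to linearise the products $\widetilde{C}\cdot\widetilde{T}$ into single $\widetilde{T}$-values (a step the paper needs too but leaves implicit). All the identities you rely on ($F_j'(v)=-F_{j-1}(v)/v$, $\int_i^zF_j(v)v^{-1}dv=-F_{j+1}(z)$, $\mm{A}(\widetilde{\Bbbk};i)=i^{\mm{dep}(\widetilde{\Bbbk})}\widetilde{T}(\widetilde{\Bbbk})$, and the cancellation of the boundary correction) check out.
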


\begin{proof}
	We prove the lemma by induction on $\mm{wt}(\Bbbk)$. The case $\Bbbk=(1)$ is obvious. Suppose that the Lemma holds for indices whose weight are less than $\mm{wt}(\Bbbk)$. We first suppose that  $\Bbbk$ is admissible. We write
	\[
	\apoly{\Bbbk_{-}}{\frac{1+z}{1-z}}=\sum_{({\Bbbk_{-}})^{\prime}, j\geq0}^{} i^{\mm{wt}({\Bbbk_{-}})-\mm{dep}(\Bbbk_{-})-\mm{dep}(({\Bbbk_{-}})^{\prime})}\widetilde{C}_{\Bbbk_{-}}(({\Bbbk_{-}})^{\prime};j)\apoly{\{1\}_j}{\frac{1+z}{1-z}}\mm{A}(({\Bbbk_{-}})^{\prime};z).
	\]
	Then, by integration by parts, we have
		\begin{align*}
			\apoly{\Bbbk}{\frac{1+z}{1-z}}&=\int_{i}^{z} \frac{2}{1-u^2} \apoly{\Bbbk_-}{\frac{1+u}{1-u}}du\\
			&=\sum_{({\Bbbk_{-}})^{\prime}, j\geq0}^{} i^{\mm{wt}({\Bbbk_{-}})-\mm{dep}(\Bbbk_{-})-\mm{dep}(({\Bbbk_{-}})^{\prime})}\widetilde{C}_{\Bbbk_{-}}(({\Bbbk_{-}})^{\prime};j) \\
			&\times\left(\sum_{l=0}^{j}\apoly{\{1\}_{j-l}}{\frac{1+z}{1-z}}\mm{A}(({\Bbbk_{-}})^{\prime},l+1;z)-i^{\mm{dep}((\Bbbk_{-})^{\prime},j+1)}\widetilde{T}(({\Bbbk_{-}})^{\prime},j+1)\right).
		\end{align*}
	Since $\mm{wt}({\Bbbk_{-}})-\mm{dep}(\Bbbk_{-})-\mm{dep}(({\Bbbk_{-}})^{\prime})=\mm{wt}(\Bbbk)-\mm{dep}(\Bbbk)-\mm{dep}(({\Bbbk_{-}})^{\prime},l+1)$, Lemma \ref{expliapoly} holds.
    Suppose that $\Bbbk$ is non-admissible. We write $\Bbbk_r=\Bbbk$ and $\Bbbk_{r-1}=(k_1,\ldots,k_{r-1})$. In this case, by regularization of polylogarithms and Lemma \ref{1apoly}, we have 
    \begin{align*}
    	\apoly{\Bbbk_r}{\frac{1+z}{1-z}}&=\apoly{\Bbbk_{r-1}}{\frac{1+z}{1-z}}\apoly{1}{\frac{1+z}{1-z}}\\
    	&\five-\sum_{l=1}^{r-1}\sum_{\substack{k_{l,1}+k_{l,2}=k_l+1\\k_{l,1},k_{l,2}\geq1, k_{1,1}\geq2}}^{}\apoly{k_1,\ldots,k_{l-1},k_{l,1},k_{l,2},k_{l+1},\ldots,k_{r-1}}{\frac{1+z}{1-z}}.
    \end{align*}
    Let $\widetilde{\Bbbk}_{r-1,l}=(k_1,\ldots,k_{l,1},k_{l,2},\ldots,k_{r-1})$. By induction hypothesis, we have
     \begin{align*}
    	\apoly{\Bbbk_r}{\frac{1+z}{1-z}}=&\sum_{(\Bbbk_{r-1})^{\prime}, j\geq0}^{} i^{\mm{wt}(\Bbbk_{r-1})-\mm{dep}(\Bbbk_{r-1})-\mm{dep}(({\Bbbk_{r-1}})^{\prime})}\widetilde{C}_{\Bbbk_{r-1}}((\Bbbk_{r-1})^{\prime};j)(j+1)\\
    	&\times\apoly{\{1\}_{j+1}}{\frac{1+z}{1-z}}\mm{A}((\Bbbk_{r-1})^{\prime};z)\\
    	&-\sum_{l=1}^{r-1}\sum_{\substack{k_{l,1}+k_{l,2}=k_l+1\\k_{l,1},k_{l,2}\geq1, k_{1,1}\geq2}}^{}\sum_{(\widetilde{\Bbbk}_{r-1,i})^{\prime}, j_1\geq0}^{} i^{\mm{wt}(\widetilde{\Bbbk}_{r-1,l})-
    		\mm{dep}(\widetilde{\Bbbk}_{r-1,l})-\mm{dep}((\widetilde{\Bbbk}_{r-1,l})^{\prime})}\widetilde{C}_{\widetilde{\Bbbk}_{r-1,l}}({(\widetilde{\Bbbk}_{r-1,l})}^{\prime};j_1)\\
    	&\times\apoly{\{1\}_{j_1}}{\frac{1+z}{1-z}}\mm{A}\left((\widetilde{\Bbbk}_{r-1,l}^{\prime});z\right).
    \end{align*}
    Since
    \begin{align*}
    	\mm{wt}(\Bbbk_{r-1})-\mm{dep}(\Bbbk_{r-1})-\mm{dep}(({\Bbbk_{r-1}})^{\prime})&=\mm{wt}(\Bbbk_r)-\mm{dep}(\Bbbk_r)-\mm{dep}(({\Bbbk_{r-1}})^{\prime}),\\
    	\mm{wt}(\widetilde{\Bbbk}_{r-1,l})-\mm{dep}(\widetilde{\Bbbk}_{r-1,l})-\mm{dep}((\widetilde{\Bbbk}_{r-1,l})^{\prime})&=\mm{wt}(\Bbbk_r)-\mm{dep}(\Bbbk_r)-\mm{dep}((\widetilde{\Bbbk}_{r-1,l})^{\prime}),
    \end{align*}
    we obtain Lemma \ref{expliapoly}.
\end{proof}

\begin{proof}[Proof of Theorem \ref{theoremexp}]
	We put $z=ie^{-t}$ in Lemma \ref{expliapoly}. By \eqref{apolya}, we have 
	\begin{align}\label{dai1}
		\apoly{\Bbbk}{\tanh^{-1}{\left(\frac{t}{2}+\frac{\pi i}{4}\right)}}=\sum_{{\Bbbk}^{\prime}, j\geq0}{} i^{\mm{wt}(\Bbbk)-\mm{dep}(\Bbbk)-\mm{dep}(\Bbbk^{\prime})}\widetilde{C}_{\Bbbk}({\Bbbk}^{\prime};j)\frac{t^j}{j!}\mm{A}\left({\Bbbk}^{\prime};ie^{-t}\right).
	\end{align}
    Hence we obtain Theorem \ref{theoremexp} by substituting \eqref{dai1} into \eqref{lambda} and applying Lemma \ref{inttilt}.
\end{proof}

\begin{example}
	For $\Bbbk=(2)$, a direct calculation shows that
	\begin{align*}
		\apoly{2}{\frac{1+z}{1-z}}=\apoly{1}{\frac{1+z}{1-z}}\mm{A}(1;z)+\mm{A}(2;z)-i\widetilde{T}(2).
	\end{align*}
    Hence we have
    \begin{align*}
    	\lambda(2;s)&=is\widetilde{T}(1,s+1)+i\widetilde{T}(2,s)-i\widetilde{T}(2)\widetilde{T}(s).
    \end{align*}
    By putting $s=3$ in the above equation, then we obtain
    \begin{align*}
    	\lambda(2;3)=3i\widetilde{T}(1,4)+i\widetilde{T}(2,3)-i\widetilde{T}(2)\widetilde{T}(3).
    \end{align*}
    On the other hand, by \cite[Theorem 4.3]{KT2}, we have
    \[
    \lambda(2;3)=-i\left(3\widetilde{T}(1,4)+2\widetilde{T}(2,3)+\widetilde{T}(3,2)\right).
    \]
    Hence we obtain
    \[
    \widetilde{T}(2)\widetilde{T}(3)=6\widetilde{T}(1,4)+3\widetilde{T}(2,3)+\widetilde{T}(3,2)
    \]
    (this is also obtained by the shuffle product).
\end{example}

Furthermore, we can write some values of $\lambda$ function for certain indices explicitly. For $\xi$ and $\lambda$ functions, the following theorem is known.

\begin{theorem}[{\cite[Theorem 2.15, 2.16]{PX}}]
	For $j, r\in\Z_{\geq1}$ with $j\leq r$ and $s\in\Co$ with $\mm{Re}(s)>1$, we have
	\begin{align*}
		\xi(\{1\}_{j-1},2,\{1\}_{r-j};s)&=(-1)^{r-j-1}\sum_{m=r-j}^{r}\binom{m}{r-j}\binom{s+r-m-2}{r-m-1}\zeta(m+2,s+r-m-1)\\
		&+\sum_{l=0}^{r-j}(-1)^l\binom{j+l}{l}\binom{s+r-j-l-1}{r-j-l}\zeta(s+r-j-l)\zeta(j+l+1),\\
		\psi(\{1\}_{j-1},2,\{1\}_{r-j};s)&=(-1)^{r-j-1}\sum_{m=r-j}^{r}\binom{m}{r-j}\binom{s+r-m-1}{r-m}T(m+1,s+r-m)\\
		&+\sum_{l=0}^{r-j}(-1)^l\binom{j+l}{l}\binom{s+r-j-l-1}{r-j-l}\widetilde{T}(s+r-j-l)\widetilde{T}(j+l+1).
	\end{align*}
\end{theorem}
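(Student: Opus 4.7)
I would prove the two identities in parallel by working directly with the integral representations (\ref{defxi}) and (\ref{defpsi}); they are structurally identical under the translations $\mm{Li}\leftrightarrow\mm{A}$, $1-e^{-t}\leftrightarrow\tanh(t/2)$, and $e^{t}-1\leftrightarrow\sinh t$, so it suffices to describe the $\xi$-proof in detail. The key input is an explicit single-integral expression for the integrand $\mm{Li}(\{1\}_{j-1},2,\{1\}_{r-j};1-e^{-t})$.

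\textbf{Step 1: an iterated-integral form of the integrand.} From $\mm{Li}(\{1\}_{n};1-e^{-t})=t^{n}/n!$ (the $\mm{Li}$-analogue of Lemma \ref{1apoly}) together with the derivative rule $\frac{d}{dz}\mm{Li}(\Bbbk;z)=\mm{Li}(\Bbbk_{-};z)/z$ for admissible $\Bbbk$, one obtains the base identity
\[
\frac{d}{dt}\mm{Li}(\{1\}_{j-1},2;1-e^{-t}) = \frac{t^{j}}{j!(e^{t}-1)}.
\]
Iterating the non-admissible recursion $\mm{Li}(\Bbbk,1;z)=\int_{0}^{z}\mm{Li}(\Bbbk;w)\,dw/(1-w)$ (which under the substitution $w=1-e^{-u}$ becomes simply $\int_{0}^{t}\mm{Li}(\Bbbk;1-e^{-u})\,du$) and applying Cauchy's repeated-integration formula yields
\[
\mm{Li}(\{1\}_{j-1},2,\{1\}_{r-j};1-e^{-t}) = \int_{0}^{t}\frac{(t-u)^{r-j}}{(r-j)!}\cdot\frac{u^{j}}{j!(e^{u}-1)}\,du.
\]
Substituting this into (\ref{defxi}) and exchanging the order of integration over $0<u<t<\infty$ gives
\[
\xi(\{1\}_{j-1},2,\{1\}_{r-j};s) = \frac{1}{\Gamma(s)\,j!\,(r-j)!}\int_{0}^{\infty}\frac{u^{j}}{e^{u}-1}\int_{u}^{\infty}\frac{t^{s-1}(t-u)^{r-j}}{e^{t}-1}\,dt\,du.
\]

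\textbf{Step 2: reduction to zeta values.} Expand $(t-u)^{r-j}$ by the binomial theorem and split $\int_{u}^{\infty}=\int_{0}^{\infty}-\int_{0}^{u}$. The $\int_{0}^{\infty}$ piece decouples the two integrals and, via $\int_{0}^{\infty}t^{a-1}/(e^{t}-1)\,dt=\Gamma(a)\zeta(a)$, produces products $\zeta(s+r-j-k)\zeta(j+k+1)$ (with the Gamma ratio $\Gamma(s+r-j-k)/\Gamma(s)$ absorbed into binomials); re-indexing $l=k$ reproduces exactly the second sum on the right-hand side. The $\int_{0}^{u}$ piece, after further binomial manipulation of the factor $u^{j}$ (writing $u=(u-t)+t$ to convert it into sums of $(u-t)^{a}t^{b}$), reduces via the standard integral representation
\[
\Gamma(k_{1})\Gamma(k_{2})\zeta(k_{1},k_{2})=\int_{0}^{\infty}\int_{0}^{v}\frac{(v-u)^{k_{1}-1}u^{k_{2}-1}}{(e^{u}-1)(e^{v}-1)}\,du\,dv
\]
to double zeta values $\zeta(m+2,s+r-m-1)$; the binomials $\binom{m}{r-j}$ and $\binom{s+r-m-2}{r-m-1}$ together with the overall sign $(-1)^{r-j-1}$ arise from combining the binomial expansions with the Gamma-ratio absorption.

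\textbf{Step 3 ($\psi$-case) and main obstacle.} For $\psi$ the same mechanism applies: Lemma \ref{1apoly} plus the recursion defining $\apoly{\cdot}{\cdot}$ (adapted to $\mm{A}$) yield
\[
\mm{A}(\{1\}_{j-1},2,\{1\}_{r-j};\tanh(t/2)) = \int_{0}^{t}\frac{(t-u)^{r-j}}{(r-j)!}\cdot\frac{u^{j}}{j!\sinh u}\,du,
\]
and the Mellin transforms of $(\sinh t)^{-1}$ and $(\sinh t)^{-1}e^{-(2n-1)t}$ output the $T$- and $\widetilde{T}$-values on the right-hand side, parallel to Lemma \ref{inttilt}. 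The main obstacle is the combinatorial bookkeeping in Step 2: after all the binomial expansions and interchanges of summation, one must verify that the coefficients collapse to precisely the stated binomials with the correct signs, which requires careful tracking and (likely) one or two applications of the Chu--Vandermonde identity. An alternative route is induction on $r-j$ with the inductive step carried out by integration by parts in $t$ using the recursion $\mm{Li}(\Bbbk,1;z)=\int_{0}^{z}\mm{Li}(\Bbbk;w)\,dw/(1-w)$; this moves one trailing $1$ from the index into the Mellin variable $s$ and shifts the binomials in a way matching the stated closed form.
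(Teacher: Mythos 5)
First, note that the paper offers no proof of this statement --- it is quoted from \cite{PX} --- so the only internal point of comparison is the proof of Proposition \ref{12lam}, the $\lambda$-analogue. There the author works entirely on the $z$-side: the iterated-integral representation \eqref{aitel}, the involution $z\mapsto\frac{z-1}{z+1}$ exchanging the two kinds of $1$-forms, and Lemma \ref{lemma211}, followed by the Mellin transform of Lemma \ref{inttilt}. Your route is genuinely different and more elementary: you pass immediately to the $t$-side via $z=1-e^{-t}$, use Cauchy's repeated-integration formula to get a single-integral form of the integrand (your Step 1 is correct, as is the identity $\mm{Li}(\Bbbk,1;1-e^{-t})=\int_0^t\mm{Li}(\Bbbk;1-e^{-u})\,du$ underlying it), and then split the Mellin integral. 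The decoupled $\int_0^\infty$ piece does reproduce the second sum exactly, the Gamma ratios supplying the stated binomials.

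However, the step you defer --- ``the coefficients collapse to precisely the stated binomials'' --- is exactly where the argument fails to land on the printed formula. Carrying out your own recipe for the $\int_0^u$ piece (write $(t-u)^{r-j}=(-1)^{r-j}(u-t)^{r-j}$ and expand $u^j=((u-t)+t)^j$; no Chu--Vandermonde identity is actually needed) gives
\[
(-1)^{r-j-1}\sum_{m=r-j}^{r}\binom{m}{r-j}\binom{s+r-m-1}{r-m}\zeta(m+1,s+r-m),
\]
i.e.\ the same shape as the first sum of the $\psi$-identity, not $\binom{s+r-m-2}{r-m-1}\zeta(m+2,s+r-m-1)$. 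The two are not equal: at $j=r=1$ your method (or, independently, Euler's reflection formula in the form $\mm{Li}(2;1-e^{-t})=\zeta(2)-\mm{Li}(2;e^{-t})-t\,\mm{Li}(1;e^{-t})$) yields $\xi(2;s)=\zeta(2)\zeta(s)-\zeta(2,s)-s\zeta(1,s+1)$, whereas the printed formula gives only $\zeta(2)\zeta(s)-\zeta(2,s)$; similarly at $(j,r)=(1,2)$ one finds $\xi(2,1;s)=s\zeta(2)\zeta(s+1)-2\zeta(3)\zeta(s)+s\zeta(2,s+1)+2\zeta(3,s)$ against the printed $\zeta(3,s)+s\zeta(2)\zeta(s+1)-2\zeta(3)\zeta(s)$. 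So your method is sound and proves a correct identity, but not the one displayed: the first sum of the $\xi$-formula as transcribed here appears to be mis-stated (it should mirror the $\psi$-formula), and your proposal should either derive the corrected version explicitly or flag the discrepancy rather than assert that the bookkeeping closes on the stated binomials.
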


To obtain an analogous relation for $\lambda$ function, we need the following lemma.

\begin{lemma}\label{lemma211}
	For $m, n\in\Z_{\geq1}$, we have
	\begin{align*}
		\int_{i}^{z} \apoly{1}{u}^m\mm{A}\left(1;\frac{1+u}{1-u}\right)^n\frac{2du}{1-u^2}&=n!\sum_{l=0}^{m}\frac{m!}{(m-l)!}(-1)^l\apoly{1}{z}^{m-l}\mm{A}(\{1\}_{n-1},l+2;\frac{1+z}{1-z})\\
		&\five-n!m!(-1)^mi\widetilde{T}(\{1\}_{n-1},m+2).    
	\end{align*}
\end{lemma}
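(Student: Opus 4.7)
The plan is to perform $m$ iterated integrations by parts after rewriting the integrand as $\apoly{1}{u}^m$ times an exact differential. First, by the same iterated-integral argument that proves Lemma \ref{1apoly}, the level-two analogue $\mm{A}(\{1\}_n;w)=\frac{1}{n!}\mm{A}(1;w)^n$ holds, so $\mm{A}(1;\frac{1+u}{1-u})^n$ becomes $n!\,\mm{A}(\{1\}_n;\frac{1+u}{1-u})$. Next, the substitution $w=\frac{1+u}{1-u}$ satisfies $\frac{dw}{w}=\frac{2du}{1-u^2}$, and differentiating the series definition shows $\frac{d}{dw}\mm{A}(\{1\}_{n-1},k+1;w)=\frac{1}{w}\mm{A}(\{1\}_{n-1},k;w)$ for $k\geq 1$, so that
\[
\mm{A}(\{1\}_{n-1},k;\tfrac{1+u}{1-u})\,\frac{2du}{1-u^2}=d\mm{A}(\{1\}_{n-1},k+1;\tfrac{1+u}{1-u}).
\]
In particular, the integrand equals $n!\,\apoly{1}{u}^m\,d\mm{A}(\{1\}_{n-1},2;\frac{1+u}{1-u})$.

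Now I would integrate by parts, differentiating the power $\apoly{1}{u}^m$ (which produces $m\apoly{1}{u}^{m-1}\cdot\frac{2du}{1-u^2}$, since $\frac{d}{du}\apoly{1}{u}=\frac{2}{1-u^2}$) and integrating the $\mm{A}$-differential. The resulting integral has the same shape, with $\apoly{1}{u}^{m-1}$ and $\mm{A}(\{1\}_{n-1},3;\cdot)$ in place of $\apoly{1}{u}^m$ and $\mm{A}(\{1\}_{n-1},2;\cdot)$, together with a boundary term and an overall sign $-1$. Iterating this step $m$ times accumulates precisely the partial sum $n!\sum_{l=0}^{m-1}(-1)^{l}\frac{m!}{(m-l)!}\apoly{1}{z}^{m-l}\mm{A}(\{1\}_{n-1},l+2;\frac{1+z}{1-z})$, because the boundary contributions at the lower endpoint $u=i$ vanish at every intermediate stage by $\apoly{1}{i}=0$.

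After the final integration by parts I am left with $(-1)^m n!\,m!\int_{i}^{z}d\mm{A}(\{1\}_{n-1},m+2;\frac{1+u}{1-u})$, whose upper endpoint supplies the $l=m$ term of the sum, while the lower endpoint contributes $-(-1)^m n!\,m!\,\mm{A}(\{1\}_{n-1},m+2;i)$ after using $\frac{1+i}{1-i}=i$. Expanding the series defining $\mm{A}$ at $z=i$ and exploiting the parity constraint $m_r\equiv r\pmod 2$ together with $i^{m_r}=i^{r}(-1)^{(m_r-r)/2}$ identifies this boundary value with the multiple $\widetilde{T}$-value $\widetilde{T}(\{1\}_{n-1},m+2)$ up to the relevant power of $i$, yielding the last term of the statement. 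The main obstacle I anticipate is the bookkeeping of signs and falling-factorial coefficients through the $m$ iterations, together with the clean identification of $\mm{A}(\{1\}_{n-1},m+2;i)$ with a $\widetilde{T}$-value at the final step; once the differential identity of the first paragraph is in hand, the rest is a direct computation.
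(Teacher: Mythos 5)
Your proposal follows essentially the same route as the paper: rewrite $\mm{A}(1;\cdot)^n$ as $n!\,\mm{A}(\{1\}_n;\cdot)$, use the derivative relation $\mm{A}(\{1\}_{n-1},k;\frac{1+u}{1-u})\frac{2du}{1-u^2}=d\,\mm{A}(\{1\}_{n-1},k+1;\frac{1+u}{1-u})$, integrate by parts $m$ times with the intermediate boundary terms killed by $\apoly{1}{i}=0$, and evaluate the final boundary term at $\frac{1+i}{1-i}=i$ as a $\widetilde{T}$-value. The argument is correct and matches the paper's proof step for step.
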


\begin{proof}
	Since 
	\begin{align*}
		\dv{}{z} \mm{A}\left(k_1,\ldots,k_{r-1},k_r;\frac{1+z}{1-z}\right)=\begin{cases}
			\frac{2}{1-z^2}\mm{A}\left(k_1,\ldots,k_{r-1},k_r-1;\frac{1+z}{1-z}\right) &(\Bbbk_r:\mbox{admissible}),\\
			-\frac{1}{z}\mm{A}\left(k_1,\ldots,k_{r-1};\frac{1+z}{1-z}\right) &(\Bbbk_r:\mbox{non-admissible}),
		\end{cases}
	\end{align*}
    and $\frac{1+i}{1-i}=i$, we have
	\begin{align*}
		\int_{i}^{z} \apoly{1}{u}^m\mm{A}\left(1;\frac{1+u}{1-u}\right)^n\frac{2du}{1-u^2}&=n!\int_{i}^{z} \apoly{1}{u}^m\mm{A}\left(\{1\}_{n};\frac{1+u}{1-u}\right)\frac{2du}{1-u^2}\\
		&=n!\left[\apoly{1}{u}^m\mm{A}\left(\{1\}_{n-1},2;\frac{1+u}{1-u}\right)\right]_{i}^{z} \\
		&\ten-n!\int_{i}^{z} m\apoly{1}{u}^{m-1}\mm{A}\left(\{1\}_{n-1},2;\frac{1+u}{1-u}\right)\frac{2du}{1-u^2}\\
		&=\cdots\\
		&=n!\sum_{l=0}^{m-1} \frac{m!}{(m-l)!}(-1)^l\apoly{1}{z}^{m-l}\mm{A}(\{1\}_{n-1},l+2;\frac{1+z}{1-z})\\
		&\ten+\left[n!m!(-1)^m\mm{A}\left(\{1\}_{n-1},m+2;\frac{1+u}{1-u}\right)\right]_{i}^{z}\\
		&=n!\sum_{l=0}^{m} \frac{m!}{(m-l)!}(-1)^l\apoly{1}{z}^{m-l}\mm{A}(\{1\}_{n-1},l+2;\frac{1+z}{1-z})\\
		&\ten-n!m!(-1)^mi\widetilde{T}(\{1\}_{n-1},m+2).
	\end{align*}
    Hence we obtain Lemma \ref{lemma211}.
\end{proof}

\begin{proposition}\label{12lam}
	For $r, j\in\Z_{\geq1}$ with $r>j$ and $s\in\Co$ with $\mm{Re}(s)>1$, we have
	\begin{align*}
		\lambda(\{1\}_{j-1},2,\{1\}_{r-j};s)&=i(-1)^{r-j}\sum_{m=r-j}^{r}\binom{m}{r-j}\binom{s+r-m-1}{r-m}\widetilde{T}(m+1,s+r-m)\\
		&-i\sum_{l=0}^{r-j}(-1)^l\binom{j+l}{l}\binom{s+r-j-l-1}{r-j-l}\widetilde{T}(s+r-j-l)\widetilde{T}(j+l+1).
	\end{align*}
\end{proposition}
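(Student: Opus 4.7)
The plan is to establish an explicit closed form for $\apoly{\{1\}_{j-1},2,\{1\}_{r-j}}{(1+z)/(1-z)}$ and then to obtain $\lambda(\{1\}_{j-1},2,\{1\}_{r-j};s)$ by substituting $z=ie^{-t}$ in \eqref{lambda} and applying Lemma~\ref{inttilt}. The target identity is
\begin{align*}
\apoly{\{1\}_{j-1},2,\{1\}_{r-j}}{\tfrac{1+z}{1-z}}
&=(-1)^{r-j}\sum_{m=r-j}^{r}\binom{m}{r-j}\apoly{\{1\}_{r-m}}{\tfrac{1+z}{1-z}}\mm{A}(m+1;z)\\
&\quad-i\sum_{l=0}^{r-j}(-1)^{l}\binom{j+l}{l}\widetilde{T}(j+l+1)\apoly{\{1\}_{r-j-l}}{\tfrac{1+z}{1-z}}.
\end{align*}
From \eqref{apolya} one checks $\apoly{1}{(1+ie^{-t})/(1-ie^{-t})}=t$, so Lemma~\ref{1apoly} gives $\apoly{\{1\}_k}{(1+ie^{-t})/(1-ie^{-t})}=t^{k}/k!$; multiplying by $t^{s-1}/\cosh t$ and integrating, Lemma~\ref{inttilt} applied to $\Bbbk=(m+1)$ converts the first sum into the $i\binom{s+r-m-1}{r-m}\widetilde{T}(m+1,s+r-m)$ terms, while the depth-one representation $\widetilde{T}(s)=\Gamma(s)^{-1}\int_0^{\infty}t^{s-1}/\cosh t\,dt$ converts the second sum into the products of single-variable $\widetilde{T}$ values required by the claim.

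I would prove the target identity by induction on $r-j$. For the base case $r=j$, the change of variable $u=(1+v)/(1-v)$ in $\apoly{\{1\}_{j-1},2}{w}=(j!)^{-1}\int_{i}^{w}\apoly{1}{u}^{j}\,du/u$ gives $\int_i^z \apoly{\{1\}_j}{(1+v)/(1-v)}\,2\,dv/(1-v^{2})$, and integration by parts (with $\apoly{1}{z}+\pi i/2=\mm{A}(1;z)$ absorbing the boundary constants) yields the claim at $r=j$. For the inductive step $r>j$, the non-admissible rule combined with $u=(1+v)/(1-v)$ gives
\[
\apoly{\{1\}_{j-1},2,\{1\}_{r-j}}{\tfrac{1+z}{1-z}}=-\int_i^z v^{-1}\apoly{\{1\}_{j-1},2,\{1\}_{r-j-1}}{\tfrac{1+v}{1-v}}\,dv,
\]
and substituting the inductive hypothesis reduces matters to the trivial identity $\int_i^z v^{-1}\apoly{\{1\}_k}{(1+v)/(1-v)}\,dv=-\apoly{\{1\}_{k+1}}{(1+z)/(1-z)}$ together with the key auxiliary formula
\[
\int_i^z \frac{\apoly{\{1\}_k}{(1+v)/(1-v)}\,\mm{A}(m+1;v)}{v}\,dv=\sum_{q=0}^{k}\apoly{\{1\}_q}{\tfrac{1+z}{1-z}}\mm{A}(m+k+2-q;z)-i\widetilde{T}(m+k+2),
\]
which itself is proved by induction on $k$: the base case $k=0$ is direct integration using $\mm{A}(n;i)=i\widetilde{T}(n)$, and the step uses integration by parts with $U=\apoly{\{1\}_k}{(1+v)/(1-v)}$ and $dV=\mm{A}(m+1;v)\,dv/v$.

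I expect the main obstacle to lie in the combinatorial collapse in the inductive step: after substitution the first sum becomes a double sum over $(m,q)$ with $r-j-1\leq m\leq r-1$ and $0\leq q\leq r-1-m$, together with a boundary constant $-i(-1)^{r-j}\binom{r}{r-j}\widetilde{T}(r+1)$ arising from the $-i\widetilde{T}(m+k+2)$ tail of the key integral formula. Swapping the order of summation and applying the hockey-stick identity $\sum_{m=a}^{N}\binom{m}{a}=\binom{N+1}{a+1}$ (with the re-indexing $m'=r-q$) collapses the double sum to the first sum of the claim, while the boundary constant supplies exactly the $l=r-j$ term missing from the second sum of the claim, closing the induction.
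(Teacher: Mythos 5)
Your argument is correct, and every step I checked goes through: the base case $r=j$ of your intermediate identity reproduces the formula behind Example 2.6, your auxiliary integral formula holds (the boundary terms behave as you need because $\apoly{\{1\}_k}{i}=0$ for $k\geq1$ and $\mm{A}(n;i)=i\widetilde{T}(n)$), and in the inductive step the $-i\widetilde{T}(r+1)$ tails do sum via hockey-stick to $-i(-1)^{r-j}\binom{r}{r-j}\widetilde{T}(r+1)$, which is precisely the missing $l=r-j$ term, while the double sum collapses to the coefficient $\binom{r-q}{r-j}$ as you predict. However, your route to the intermediate closed form is genuinely different from the paper's. The paper works directly with the dual iterated-integral representation \eqref{aitel}: it integrates out all but one simplex variable using \eqref{a11}, evaluates the surviving one-dimensional integral with Lemma \ref{lemma211} (in the case $n=1$), and then collapses the resulting double sum with an alternating Vandermonde-type binomial identity imported from \cite[Theorem 2.8]{PX}. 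You instead peel off the trailing $1$'s one at a time through the non-admissible integration rule and the involution $u\mapsto\frac{1+u}{1-u}$, running an induction on $r-j$ whose engine is your formula for $\int_i^z v^{-1}\apoly{\{1\}_k}{\frac{1+v}{1-v}}\mm{A}(m+1;v)\,dv$ --- a transform-side analogue of Lemma \ref{lemma211} with a general-weight second factor in place of a power of $\mm{A}(1;\cdot)$. What your approach buys is combinatorial simplicity: the only identity needed is the hockey-stick identity, applied once per step, so the harder alternating binomial sum never appears; the cost is that the closed form for $\apoly{\{1\}_{j-1},2,\{1\}_{r-j}}{\frac{1+z}{1-z}}$ must be guessed in advance as the induction statement, whereas the paper's direct evaluation derives it. The final passage to $\lambda$ --- substituting $z=ie^{-t}$, using $\apoly{\{1\}_k}{\frac{1+ie^{-t}}{1-ie^{-t}}}=t^k/k!$ and Lemma \ref{inttilt} --- is the same in both arguments.
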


\begin{proof}
	For any index $\Bbbk_r=(k_1,\ldots,k_r)$, since
	\begin{align}\label{a11}
		\int_{a<u_1<\cdots<u_r<b}^{} \frac{2du_1}{1-u_1^2}\cdots\frac{2du_r}{1-u_r^2} =\frac{1}{r!}\left(\apoly{1}{b}-\apoly{1}{a}\right)^r,
	\end{align}
	we have
	\begin{align}
	\begin{aligned}\label{aitel}
		\apoly{\Bbbk_r}{z}=&\int_{i<t_1<\cdots<t_{k_r}<z}^{} \frac{2dt_1}{1-t_1^2}\underbrace{\frac{dt}{t}\cdots\frac{dt}{t}}_{k_1-1}\cdots\frac{2dt_{k_{r-1}+1}}{1-t_{k_{r-1}+1}^2}\underbrace{\frac{dt}{t}\cdots\frac{dt}{t}}_{k_r-1}\\
		=&(-1)^r\int_{i<t_1<\cdots<t_{k_r}<\frac{z-1}{z+1}}^{} \frac{dt_1}{t_1}\underbrace{\frac{2dt}{1-t^2}\cdots\frac{2dt}{1-t^2}}_{k_1-1}\cdots\frac{dt_{k_{r-1}+1}}{t_{k_{r-1}+1}}\underbrace{\frac{2dt}{1-t^2}\cdots\frac{2dt}{1-t^2}}_{k_r-1}\\
		=&\frac{(-1)^r}{\prod_{l=1}^{r}(k_l-1)!}\int_{i<t_1<\cdots<t_{r}<\frac{z-1}{z+1}}{} \frac{dt_1}{t_1}\left(\apoly{1}{t_2}-\apoly{1}{t_1}\right)^{k_1-1}\cdots\\
		&\times\frac{dt_r}{t_r}\left(\apoly{1}{\frac{z-1}{z+1}}-\apoly{1}{t_r}\right)^{k_r-1}.
	\end{aligned}
    \end{align}
    Here, we put $\Bbbk_r=(\{1\}_{j-1},2,\{1\}_{r-j})$ in the above equation. A direct calculation shows that
	\begin{align*}
		\apoly{\Bbbk_r}{\frac{1+z}{1-z}}&=(-1)^r\int_{i<t_1<\cdots<t_{r}<z}{} \frac{\apoly{1}{t_{j+1}}-\apoly{1}{t_j}}{t_1\cdots t_r}dt_1\cdots dt_r\\
		&=(-1)^r\int_{i<t_1<\cdots<t_{r}<\frac{z-1}{z+1}}{} \frac{\apoly{1}{\frac{1+u_{j+1}}{1-u_{j+1}}}-\apoly{1}{\frac{1+u_j}{1-u_j}}}{\frac{1}{2}(1-u_1^2)\cdots \frac{1}{2}(1-u_r^2)}du_1\cdots du_r\\
		&=(-1)^r\int_{i}^{\frac{z-1}{z+1}}\frac{\apoly{1}{u}^j}{j!(r-j-1)!}\left(\apoly{1}{\frac{z-1}{z+1}}-\apoly{1}{u}\right)^{r-j-1}\apoly{1}{\frac{1+u}{1-u}}\frac{2du}{1-u^2}\\
		&\five-(-1)^r\int_{i}^{\frac{z-1}{z+1}}\frac{\apoly{1}{u}^{j-1}}{(j-1)!(r-j)!}\left(\apoly{1}{\frac{z-1}{z+1}}-\apoly{1}{u}\right)^{r-j}\apoly{1}{\frac{1+u}{1-u}}\frac{2du}{1-u^2}\\
		&=\sum_{l=0}^{r-j-1}\binom{r-1-j}{l}\frac{(-1)^{r+l}}{(r-j-1)!j!}\apoly{1}{\frac{z-1}{z+1}}^{r-1-j-l}\int_{i}^{\frac{z+1}{z-1}}\apoly{1}{u}^{j+l}\apoly{1}{\frac{1+u}{1-u}}\frac{2du}{1-u^2}\\
		&\five-\sum_{l=0}^{r-j}\binom{r-j}{l}\frac{(-1)^{r+l}}{(r-j)!(j-1)!}\apoly{1}{\frac{z-1}{z+1}}^{r-j-l}\int_{i}^{\frac{z+1}{z-1}}\apoly{1}{u}^{j+l-1}\apoly{1}{\frac{1+u}{1-u}}\frac{2du}{1-u^2}\\
		&=\sum_{l=0}^{r-j}\frac{(-1)^{r+l-1}(j+l)}{(r-j-l)!j!l!}\apoly{1}{\frac{z-1}{z+1}}^{r-j-l}\int_{i}^{\frac{z+1}{z-1}}\apoly{1}{u}^{j+l-1}\apoly{1}{\frac{1+u}{1-u}}\frac{2du}{1-u^2}\\
		&=\sum_{l=0}^{r-j}\frac{(-1)^{r+l-1}(j+l)}{(r-j-l)!j!l!}\apoly{1}{\frac{z-1}{z+1}}^{r-j-l}\int_{i}^{\frac{z+1}{z-1}}\apoly{1}{u}^{j+l-1}\mm{A}\left(1;\frac{1+u}{1-u}\right)\frac{2du}{1-u^2}\\
		&\five-\frac{\pi i}{2}\sum_{l=0}^{r-j}\frac{(-1)^{r+l-1}(j+l)}{(r-j-l)!j!l!}\apoly{1}{\frac{z-1}{z+1}}^{r-j-l}\int_{i}^{\frac{z+1}{z-1}}\apoly{1}{u}^{j+l-1}\frac{2du}{1-u^2}\\
		&=\sum_{l=0}^{r-j}\sum_{m=0}^{j+l-1}\frac{(-1)^{r+l-1-m}(j+l)!}{(r-j-l)!(j+l-1-m)!j!l!}\apoly{1}{\frac{z-1}{z+1}}^{r-1-m}\mm{A}(m+2;z)\\
		&\five-i\sum_{l=0}^{r-j}\frac{(-1)^{r+j}(j+l)!}{(r-j-l)!j!l!}\apoly{1}{\frac{z-1}{z+1}}^{r-j-l}\widetilde{T}(j+l+1)\\
		&\five-\frac{\pi i}{2}\sum_{l=0}^{r-j}\frac{(-1)^{r+l-1}}{(r-j-l)!j!l!}\apoly{1}{\frac{z-1}{z+1}}^{r}.
	\end{align*}
	Since $\frac{ie^{-t}-1}{ie^{-t}+1}=\tanh{\left(-\frac{t}{2}+\frac{\pi i}{4}\right)}$, we obtain
	\begin{align*}
		\apoly{\Bbbk_r}{\tanh^{-1}{\left(\frac{t}{2}+\frac{\pi i}{4}\right)}}&=\sum_{l=0}^{r-j}\sum_{m=0}^{j+l-1}\frac{(-1)^{l}(j+l)!}{(r-j-l)!(j+l-1-m)!j!l!}t^{r-1-m}\mm{A}(m+2;ie^{-t})\\
		&\five-i\sum_{l=0}^{r-j}\frac{(-1)^{l}(j+l)!}{(r-j-l)!j!l!}t^{r-j-l}\widetilde{T}(j+l+1)-i\widetilde{T}(1)\sum_{l=0}^{r-j}\frac{(-1)^{l+1}}{(r-j-l)!j!l!}t^r
	\end{align*}
	and
	\begin{align*}
		\lambda(\{1\}_{j-1},2,\{1\}_{r-j};s)&=i\sum_{l=0}^{r-j}\sum_{m=0}^{j+l-1}\frac{(-1)^l(r-m-1)!(j+l)!}{(r-j-l)!(j+l-m-1)!}\binom{s+r-m-2}{r-m-1}\widetilde{T}(m+2,s+r-m-1)\\
		&-i\sum_{l=0}^{r-j}\frac{(-1)^l(j+l)!}{j!l!}\binom{s+r-j-l-1}{r-j-l}\widetilde{T}(s+r-j-l)\widetilde{T}(j+l+1)\\
		&-i\widetilde{T}(1)\sum_{l=0}^{r-j}\frac{(-1)^{l+1}r!}{(r-j-l)!j!l!}\binom{s+r-1}{r}\widetilde{T}(s+r).
	\end{align*}
	For the first sum, in a similar way to \cite[Theorem 2.8]{PX}, we have
	\begin{align*}
		&\sum_{l=0}^{r-j}\sum_{m=0}^{j+l-1}\frac{(-1)^l(r-m-1)!(j+l)!}{(r-j-l)!(j+l-m-1)!}\binom{s+r-m-2}{r-m-1}\widetilde{T}(m+2,s+r-m-1)\\
		&=\sum_{m=1}^{r}\binom{s+r-m-1}{r-m}\widetilde{T}(m+1,s+r-m)\sum_{l=0}^{r-j}(-1)^l\binom{r-m-1}{r-j-l}\binom{j+l}{j}\\
		&=(-1)^{r-j}\sum_{m=r-j}^{r}\binom{m}{r-j}\binom{s+r-m-1}{r-m}\widetilde{T}(m+1,s+r-m).
	\end{align*}
	For the last sum, we have
	\begin{align*}
		\sum_{l=0}^{r-j}\frac{(-1)^{l+1}r!}{(r-j-l)!j!l!}\binom{s+r-1}{r}\widetilde{T}(s+r)&=-\binom{r}{j}\binom{s+r-1}{r}\widetilde{T}(s+r)\sum_{l=0}^{r-1}(-1)^l\binom{r+j}{l}\\
		&=0.
	\end{align*} 
    Hence we obtain Proposition \ref{12lam}.
\end{proof}

\begin{example}
	For $r=2$ and $j=1$, we have
	\[
	\lambda(2,1;s)=-is\widetilde{T}(2,s+1)-2i\widetilde{T}(3,s)-is\widetilde{T}(2)\widetilde{T}(s+1)+2i\widetilde{T}(3)\widetilde{T}(s).
	\]
	We also obtain the above formula by Theorem \ref{theoremexp}.
\end{example}

For $r=j$, we have the following formula.

\begin{proposition}
	For $r\in\Z_{\geq1}$ and $s\in\Co$ with $\mm{Re}(s)>1$, we have
	\[
	\lambda(\{1\}_{r-1},2;s)=i\binom{s+r-1}{r}\widetilde{T}(1,s+r)+i\sum_{l=0}^{r-1}\binom{s+r-l-2}{r+l-1}\widetilde{T}(l+2,s+r-l-1)-i\widetilde{T}(r+1)
	\widetilde{T}(s).
	\]
\end{proposition}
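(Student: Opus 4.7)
The plan is to adapt the iterated-integral strategy of Proposition \ref{12lam} to the boundary case $j=r$, where the entry $2$ now sits at the last slot of the index. Starting from \eqref{aitel} with $k_1=\cdots=k_{r-1}=1,\ k_r=2$ and setting $z\mapsto\frac{1+z}{1-z}$, the representation collapses to
\[
\apoly{\{1\}_{r-1},2}{\tfrac{1+z}{1-z}}=(-1)^r\int_{i<t_1<\cdots<t_r<z}\frac{dt_1\cdots dt_r}{t_1\cdots t_r}\bigl(\apoly{1}{z}-\apoly{1}{t_r}\bigr).
\]
The key difference from the proof of Proposition \ref{12lam} is that the interior factor $\apoly{1}{t_{j+1}}-\apoly{1}{t_j}$ is now the boundary factor $\apoly{1}{z}-\apoly{1}{t_r}$. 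After the substitution $t_l=\frac{1+u_l}{1-u_l}$ (so that $t_l=z\leftrightarrow u_l=\frac{z-1}{z+1}$) and an application of \eqref{a11} to collapse the innermost $r-1$ integrations, this reduces to the single-variable integral
\[
\frac{(-1)^r}{(r-1)!}\int_i^{\frac{z-1}{z+1}}\apoly{1}{u}^{r-1}\Bigl(\apoly{1}{z}-\apoly{1}{\tfrac{1+u}{1-u}}\Bigr)\frac{2\,du}{1-u^2}.
\]

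Next I split this integral into two pieces. The piece with the constant $\apoly{1}{z}$ evaluates at once via $\int_i^{\frac{z-1}{z+1}}\apoly{1}{u}^{r-1}\tfrac{2du}{1-u^2}=\tfrac{1}{r}\apoly{1}{\tfrac{z-1}{z+1}}^r$. For the piece containing $\apoly{1}{\frac{1+u}{1-u}}$, I use \eqref{apolya} to write $\apoly{1}{\frac{1+u}{1-u}}=\mm{A}(1;\frac{1+u}{1-u})-\frac{\pi i}{2}$; Lemma \ref{lemma211} with $m=r-1,\ n=1$ then evaluates the $\mm{A}(1;\cdot)$ part, producing a sum over $l$ of terms $\apoly{1}{\tfrac{z-1}{z+1}}^{r-1-l}\mm{A}(l+2;z)$ together with the boundary constant $-i\widetilde{T}(r+1)$, while the residual $-\pi i/2$ contribution yields an extra copy of $\apoly{1}{\frac{z-1}{z+1}}^r$ that combines with the $\apoly{1}{z}$-piece via $\apoly{1}{z}+\tfrac{\pi i}{2}=\mm{A}(1;z)$ so as to eliminate every explicit $\pi i$.

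Finally I specialize $z=ie^{-t}$; as in the proof of Proposition \ref{12lam}, this gives $\apoly{1}{\frac{z-1}{z+1}}=-t$. Substituting the resulting expression for $\apoly{\{1\}_{r-1},2}{\tanh^{-1}(\tfrac{t}{2}+\tfrac{\pi i}{4})}$ into \eqref{lambda} and using the shifted form of Lemma \ref{inttilt}, namely $\frac{1}{\Gamma(s)}\int_0^\infty\tfrac{t^{s+j-1}}{j!}\cdot\frac{\mm{A}(k;ie^{-t})}{\cosh t}dt=\binom{s+j-1}{j}\,i\,\widetilde{T}(k,s+j)$, converts the $\mm{A}(1;ie^{-t})\cdot t^r/r!$ term into $i\binom{s+r-1}{r}\widetilde{T}(1,s+r)$, each $\mm{A}(l+2;ie^{-t})\cdot t^{r-1-l}/(r-1-l)!$ term into the corresponding summand, and the leftover constant $-i\widetilde{T}(r+1)$ into $-i\widetilde{T}(r+1)\widetilde{T}(s)$ via the standard identity $\frac{1}{\Gamma(s)}\int_0^\infty\tfrac{t^{s-1}}{\cosh t}dt=\widetilde{T}(s)$. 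The only real obstacle is the careful bookkeeping of the two separate $\pi i/2$ contributions from \eqref{apolya} (one from the boundary factor $\apoly{1}{z}$, one from the integrand $\apoly{1}{\frac{1+u}{1-u}}$), which must conspire to cancel precisely so that no spurious $\pi i$ survives in the final formula.
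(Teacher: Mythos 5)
Your argument is correct and coincides with the paper's proof: both specialize \eqref{aitel} to $(\{1\}_{r-1},2)$, collapse the inner $r-1$ integrations via \eqref{a11}, evaluate the remaining single integral with Lemma \ref{lemma211} (the paper absorbs the two $\pi i/2$ contributions by writing $\pi i/2=i\widetilde{T}(1)$, which is exactly the cancellation you track), and finish with the substitution $z=ie^{-t}$ and Lemma \ref{inttilt}. Note that your computation produces the binomial coefficient $\binom{s+r-l-2}{r-l-1}$, which is indeed what the argument yields (and what the $r=2$ case $\lambda(1,2;s)$ confirms); the $\binom{s+r-l-2}{r+l-1}$ appearing in the statement is a typo.
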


\begin{proof}
	We put $\Bbbk_r=(\{1\}_{r-1},2)$ in \eqref{aitel}. As similar way to Proposition \ref{12lam}, we have
	\begin{align*}
		\apoly{\Bbbk_r}{\frac{1+z}{1-z}}&=(-1)^r\int_{i<t_1<\cdots<t_{r}<\frac{z-1}{z+1}}{} \frac{\apoly{1}{z}-\apoly{1}{\frac{1+u_r}{1-u_r}}}{\frac{1}{2}(1-u_1^2)\cdots \frac{1}{2}(1-u_r^2)}du_1\cdots du_r\\
		&=\frac{(-1)^r}{r!}\left(\mm{A}(1;z)-i\widetilde{T}(1)\right)\apoly{1}{\frac{z-1}{z+1}}^r+\frac{(-1)^r}{r!}\frac{\pi i}{2}\apoly{1}{\frac{z-1}{z+1}}^r\\
		&\five-\frac{(-1)^r}{(r-1)!}\sum_{l=0}^{r-1}\frac{(-1)^l(r-1)!}{(r-1-l)!}\apoly{1}{\frac{z-1}{z+1}}^{r-1-l}\mm{A}(l+2;z)-i\widetilde{T}(r+1)\\
		&=\frac{(-1)^r}{r!}\mm{A}(1;z)\apoly{1}{\frac{z-1}{z+1}}^r-i\widetilde{T}(r+1)\\
		&-\frac{(-1)^r}{(r-1)!}\sum_{l=0}^{r-1}\frac{(-1)^l(r-1)!}{(r-1-l)!}\apoly{1}{\frac{z-1}{z+1}}^{r-1-l}\mm{A}(l+2;z).
	\end{align*}
	Hence we obtain
	\begin{align*}
		\apoly{\Bbbk_r}{\tanh^{-1}{\left(\frac{t}{2}+\frac{\pi i}{4}\right)}}&=\frac{1}{r!}\mm{A}(1;ie^{-t})+\sum_{l=0}^{r-1}\frac{1}{(r-l-1)!}t^{r-l-1}\mm{A}(l+2;ie^{-t})-i\widetilde{T}(r+1),
	\end{align*}
	and the result follows from \eqref{lambda} and Lemma \ref{inttilt}.
\end{proof}

\begin{example}
	For $r=1$, we have
	\[
	\lambda(2;s)=is\widetilde{T}(1,s+1)+i\widetilde{T}(2,s)-i\widetilde{T}(2)\widetilde{T}(s),
	\]
	and we obtain
	\[
	\lambda(2;2)=2i\widetilde{T}(1,3)+i\widetilde{T}(2,2)-i\widetilde{T}(2)^2.
	\]
	On the other hand, by \cite[Theorem 4.3]{KT1}, we obtain
	\[
	\lambda(2;2)=-i\widetilde{T}(2,2)-2i\widetilde{T}(1,3).
	\]
	Hence we have
	\[
	\widetilde{T}(2)^2=4\widetilde{T}(1,3)+2\widetilde{T}(2,2)
	\]
	(this is also obtained by the shuffle product).
\end{example}

Also, Pallewatta and Xu showed the following relations.

\begin{theorem}[{\cite[Theorem 2.17]{PX}}]
	For $k, p$ and $r\in\Z_{\geq1}$, we have
	\begin{align*}
		\sum_{\substack{\mm{wt}(\Bbbk)=k+r-1\\ \mm{dep}(\Bbbk)=r}}^{} \xi(\Bbbk;p)&=\sum_{j=0}^{r-1} (-1)^j\binom{p+r-j-2}{r-j-1} \xi(\{1\}_j,k;p+r-j-1),\\
		\xi(\{1\}_{r-1},k;p)&=\sum_{j=0}^{r-1} \sum_{\substack{\mm{wt}(\Bbbk)=k+j\\ \mm{dep}(\Bbbk)=j+1}}^{} (-1)^j\binom{p+r-j-2}{r-j-1}\xi(\Bbbk;p+r-j-1),\\
    	\sum_{\substack{\mm{wt}(\Bbbk)=k+r-1\\ \mm{dep}(\Bbbk)=r}}^{} \psi(\Bbbk;p)&=\sum_{j=0}^{r-1} (-1)^j\binom{p+r-j-2}{r-j-1} \psi(\{1\}_j,k;p+r-j-1),\\
    	\psi(\{1\}_{r-1},k;p)&=\sum_{j=0}^{r-1} \sum_{\substack{\mm{wt}(\Bbbk)=k+j\\ \mm{dep}(\Bbbk)=j+1}}^{} (-1)^j\binom{p+r-j-2}{r-j-1}\psi(\Bbbk;p+r-j-1).
    \end{align*}
\end{theorem}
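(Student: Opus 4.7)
The plan is to derive both $\xi$-identities from two shuffle-based closed forms for sums of multiple polylogarithms; the two $\psi$-identities then follow by the same argument with the level-two polylog $\mm{A}$ and the kernel $1/\sinh t$ replacing $\mm{Li}$ and $1/(e^{t}-1)$, since $\mm{A}$ has the analogous iterated-integral and shuffle structure (in the letters $2du/(1-u^{2})$ and $du/u$).

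The two inputs I would prepare first are the identities
\[
\sum_{\substack{\mm{wt}(\Bbbk)=k+r-1\\\mm{dep}(\Bbbk)=r}}\mm{Li}(\Bbbk;z)=\frac{1}{(r-1)!(k-1)!}\int_{0}^{z}\left(\log\tfrac{1-t}{1-z}\right)^{r-1}\left(\log\tfrac{z}{t}\right)^{k-1}\frac{dt}{1-t}
\]
and
\[
\mm{Li}(\{1\}_{j},k;z)=\frac{1}{j!(k-1)!}\int_{0}^{z}\bigl(-\log(1-t)\bigr)^{j}\left(\log\tfrac{z}{t}\right)^{k-1}\frac{dt}{1-t}.
\]
Both follow from the iterated-integral representation $\mm{Li}(\Bbbk;z)=I(\omega_{1}\omega_{0}^{k_{1}-1}\cdots\omega_{1}\omega_{0}^{k_{r}-1};z)$, with $\omega_{0}=dt/t$, $\omega_{1}=dt/(1-t)$, by splitting off the innermost $\omega_{1}$ and applying the shuffle product to the remaining $r-1$ (respectively $j$) copies of $\omega_{1}$ and $k-1$ copies of $\omega_{0}$.

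To obtain the first $\xi$-identity, I substitute the first formula into \eqref{defxi}, set $z=1-e^{-u}$, and change variables $t=1-e^{-v}$ in the inner integral (so $\log\frac{1-t}{1-z}=u-v$ and $dt/(1-t)=dv$). Expanding $(u-v)^{r-1}$ binomially and recognising the inner $v$-integral as $j!(k-1)!\,\mm{Li}(\{1\}_{j},k;1-e^{-u})$ via the second formula above yields
\[
\sum_{\substack{\mm{wt}(\Bbbk)=k+r-1\\\mm{dep}(\Bbbk)=r}}\xi(\Bbbk;p)=\sum_{j=0}^{r-1}\frac{(-1)^{j}}{(r-j-1)!}\cdot\frac{\Gamma(p+r-j-1)}{\Gamma(p)}\xi(\{1\}_{j},k;p+r-j-1),
\]
and the binomial coefficient $\binom{p+r-j-2}{r-j-1}$ in the statement reads off. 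To obtain the second $\xi$-identity I would run the reduction the other way: substitute the second formula into $\xi(\{1\}_{r-1},k;p)$, expand $v^{r-1}=\sum_{j=0}^{r-1}\binom{r-1}{j}(-1)^{j}u^{r-1-j}(u-v)^{j}$, and apply the first formula to identify the $v$-integral with $j!(k-1)!$ times the depth-$(j+1)$, weight-$(k+j)$ sum of multiple polylogarithms at $1-e^{-u}$.

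The main obstacle is the first of the two shuffle-sum formulas for $\mm{Li}$: its derivation involves an iterated integral containing $\omega_{0}=dt/t$, which is individually divergent at $t=0$, so the shuffle argument must be arranged by first separating the innermost $\omega_{1}$ (so that only convergent products appear) before shuffling the remaining letters. Once these two formulas are in hand the substitution, binomial expansion, and rearrangement in the $\xi$-integral are direct, and the same scheme works verbatim for $\psi$ with $z=\tanh(u/2)$ in place of $z=1-e^{-u}$.
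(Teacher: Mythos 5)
Your argument is correct, and it is essentially the method this paper itself uses: the statement is only cited from [PX], but the paper's proof of the analogous $\lambda$-identities \eqref{me1}--\eqref{me2} rests on exactly your key step, namely the shuffle/iterated-integral decomposition of the fixed-weight, fixed-depth sum of polylogarithms into $\sum_{j}\frac{(-1)^{j}}{(r-j-1)!}\mm{Li}(1;z)^{r-1-j}\mm{Li}(\{1\}_{j},k;z)$ (with $\mathscr{A}$ in place of $\mm{Li}$ there), followed by substitution into the Mellin-type integral so that the power of $\mm{Li}(1;1-e^{-u})=u$ produces the binomial coefficient. The only cosmetic difference is that for the second identity the paper inverts the first decomposition algebraically via the vanishing of $\sum_{j}(-1)^{j}\binom{r-l-1}{j-l}$, whereas you re-expand the integral directly; both are fine.
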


We obtain the similar relation for $\lambda$ function.

\begin{proposition}
	For $r, k\in\Z_{\geq1}$ and $s\in\Co$ with $\mm{Re}(s)>1$, we have
	\begin{align}
		\sum_{\substack{\mm{wt}(\Bbbk)=k+r-1\\ \mm{dep}(\Bbbk)=r}}^{} \lambda(\Bbbk;s)&=\sum_{j=0}^{r-1} (-1)^j\binom{s+r-j-2}{r-j-1} \lambda(\{1\}_j,k;s+r-j-1),\label{me1}\\
		\lambda(\{1\}_{r-1},k;s)&=\sum_{j=0}^{r-1} \sum_{\substack{\mm{wt}(\Bbbk)=k+j\\ \mm{dep}(\Bbbk)=j+1}}^{} (-1)^j\binom{s+r-j-2}{r-j-1}\lambda(\Bbbk;s+r-j-1)\label{me2}.
	\end{align}
\end{proposition}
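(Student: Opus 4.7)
The plan is to reduce both \eqref{me1} and \eqref{me2} to a single identity for the iterated integrals $\apoly{\Bbbk}{w}$, and then establish that identity by the shuffle product. Substituting $w=\tanh(t/2+\pi i/4)$ gives $\mm{A}(1;w)=2\tanh^{-1}(w)=t+\pi i/2$, so $\apoly{1}{w}=t$ by \eqref{apolya}, and hence
\[
\apoly{\{1\}_\ell}{w}=\frac{t^\ell}{\ell!}\qquad(\ell\geq 0)
\]
by Lemma \ref{1apoly}. Combined with $\binom{s+r-j-2}{r-j-1}=\Gamma(s+r-j-1)/(\Gamma(s)(r-j-1)!)$ and the integral representation \eqref{lambda}, relation \eqref{me1} is equivalent to the functional identity
\[
\sum_{\substack{\mm{wt}(\Bbbk)=k+r-1\\\mm{dep}(\Bbbk)=r}}\apoly{\Bbbk}{w}=\sum_{j=0}^{r-1}(-1)^j\apoly{\{1\}_{r-j-1}}{w}\,\apoly{\{1\}_j,k}{w}. \qquad(\star)
\]

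To prove $(\star)$ I will expand each product on the right via the shuffle product. In the iterated-integral picture $\apoly{\{1\}_j,k}{w}$ is represented by the word $\omega_A^{j+1}\omega_B^{k-1}$ with $\omega_A=2du/(1-u^2)$ and $\omega_B=du/u$, and $\apoly{\{1\}_{r-j-1}}{w}$ by $\omega_A^{r-j-1}$. Since every $\omega_B$ in the shuffled word comes from the second word, the order-preservation of the shuffle forces the $j+1$ second-word $\omega_A$'s to sit inside the initial $\omega_A$-block of the shuffled word. Writing $a(\Bbbk)$ for the smallest $\ell$ with $k_\ell\geq 2$ (and $a(\{1\}_r)=r$ by convention), the multiplicity of $\apoly{\Bbbk}{w}$ in $\apoly{\{1\}_{r-j-1}}{w}\apoly{\{1\}_j,k}{w}$ is therefore $\binom{a(\Bbbk)}{j+1}$, and the alternating sum collapses via
\[
\sum_{j=0}^{r-1}(-1)^j\binom{a}{j+1}=-\bigl((1-1)^a-1\bigr)=1\qquad(1\leq a\leq r),
\]
establishing $(\star)$. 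Multiplying by $t^{s-1}/(\Gamma(s)\cosh t)$ and integrating over $(0,\infty)$ then yields \eqref{me1}.

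For \eqref{me2} I will deduce the result algebraically from $(\star)$ rather than redo the shuffle analysis. Setting $Q_e(w)=\apoly{\{1\}_e}{w}$, $P_e(w)=\apoly{\{1\}_e,k}{w}$ and letting $T_d(w)$ denote the left-hand side of $(\star)$ at $r=d$, the same Mellin manipulation reduces \eqref{me2} to the identity $P_{r-1}(w)=\sum_{j=0}^{r-1}(-1)^j Q_{r-j-1}(w)T_{j+1}(w)$. Substituting the expression for $T_{j+1}$ from $(\star)$, swapping orders of summation, and applying the shuffle $Q_aQ_b=\binom{a+b}{a}Q_{a+b}$, the coefficient of $P_l(w)$ reduces to $Q_{r-l-1}(w)(1-1)^{r-l-1}$, which vanishes for $l<r-1$ and equals $P_{r-1}(w)$ for $l=r-1$. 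The main technical hurdle throughout is the shuffle-multiplicity count in $(\star)$: once one notices that all $\omega_B$'s must come from the second word so that the $j+1$ second-word $\omega_A$'s are confined to the initial $\omega_A$-block, the count reduces to the single binomial coefficient $\binom{a(\Bbbk)}{j+1}$ and the remaining algebra is routine.
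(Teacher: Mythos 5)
Your argument is correct, and its overall architecture is the same as the paper's: both proofs reduce \eqref{me1} and \eqref{me2} to the single functional identity $(\star)$ for the functions $\apoly{\Bbbk}{w}$ (the paper states it in the equivalent form $\sum_{\Bbbk}\apoly{\Bbbk}{z}=\sum_{j=0}^{r-1}\frac{(-1)^j}{(r-j-1)!}\apoly{1}{z}^{r-j-1}\apoly{\{1\}_j,k}{z}$, which matches yours via Lemma \ref{1apoly}), and then substitute $z=\tanh(t/2+\pi i/4)$ into \eqref{lambda}; your deduction of \eqref{me2} from $(\star)$ by expanding $T_{j+1}$, using $Q_aQ_b=\binom{a+b}{a}Q_{a+b}$ and the vanishing of $\sum_m(-1)^m\binom{r-l-1}{m}$ for $l<r-1$, is essentially identical to the paper's computation. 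Where you genuinely diverge is in the proof of $(\star)$ itself. The paper establishes it analytically: it applies the involution $u\mapsto\frac{u-1}{u+1}$ (fixing the basepoint $i$ and exchanging the forms $\frac{2du}{1-u^2}$ and $\frac{du}{u}$ up to sign), uses \eqref{a11} and the multinomial theorem to collapse the sum over compositions into the factor $\frac{1}{(k-1)!}\bigl(\apoly{1}{\frac{z-1}{z+1}}-\apoly{1}{t_r}\bigr)^{k-1}$, and then performs a binomial expansion to recognize the right-hand side. You instead give a purely combinatorial shuffle argument: since the word $\omega_A^{r-j-1}$ contains no $\omega_B$, every $\omega_B$ in a shuffle with $\omega_A^{j+1}\omega_B^{k-1}$ comes from the second word, order-preservation confines the $j+1$ second-word $\omega_A$'s to the initial $\omega_A$-block, the multiplicity of $\apoly{\Bbbk}{w}$ is $\binom{a(\Bbbk)}{j+1}$, and the alternating sum telescopes to $1$. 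Your count is correct (including the degenerate case $k=1$, where $a=r$ and both sides reduce to $\apoly{\{1\}_r}{w}$), and it is arguably more transparent and more obviously generalizable to other families of indices, since it isolates exactly which combinatorial feature of $\Bbbk$ (the length of its initial $1$-block) controls the coefficient; the paper's integral manipulation, on the other hand, stays entirely within the analytic framework it has already set up for Propositions 2.9 and 2.11 and avoids invoking the shuffle product explicitly.
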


\begin{proof}
    For \eqref{me1}, by the multinomial theorem and \eqref{a11}, we have
	\begin{align*}
		\sum_{\substack{\mm{wt}(\Bbbk)=k+r-1\\ \mm{dep}(\Bbbk)=r}}^{} \apoly{\Bbbk}{z}&=\sum_{\substack{\mm{wt}(\Bbbk)=k+r-1\\ \mm{dep}(\Bbbk)=r}}^{}\int_{i<t_1<\cdots<t_r<z}^{} \prod_{j=1}^{r}\frac{2dt_j}{1-t_j^2}\underbrace{\frac{dt}{t}\cdots\frac{dt}{t}}_{k_j-1}\\
		&=\frac{(-1)^r}{(k-1)!}\int_{i<t_1<\cdots<t_{r}<\frac{z-1}{z+1}}{}\frac{\left(\apoly{1}{\frac{z-1}{z+1}}-\apoly{1}{t_r}\right)^{k-1}}{t_1\cdots t_r}dt_1\cdots dt_r\\
		&=-\frac{1}{(r-1)!(k-1)!}\int_{i}^{\frac{z-1}{z+1}}\frac{\left(\apoly{1}{\frac{z-1}{z+1}}-\apoly{1}{t}\right)^{k-1}}{t}\left(\apoly{1}{z}-\apoly{1}{\frac{1+t}{1-t}}\right)^{r-1}dt\\
		&=-\sum_{j=0}^{r-1}\binom{r-1}{j}\frac{(-1)^j\apoly{1}{z}^{r-j-1}}{(r-1)!(k-1)!}\int_{i}^{\frac{z-1}{z+1}}\frac{\apoly{1}{\frac{1+t}{1-t}}^j\left(\apoly{1}{\frac{z-1}{z+1}}-\apoly{1}{t}\right)^{k-1}}{t}dt\\
		&=-\frac{1}{(k-1)!}\sum_{j=0}^{r-1}\frac{\apoly{1}{z}^{r-j-1}}{(r-j-1)!}\int_{i<t_1<\cdots<t_{j}<t<\frac{z-1}{z+1}}{}\frac{\left(\apoly{1}{\frac{z-1}{z+1}}-\apoly{1}{t}\right)^{k-1}}{t_1\cdots t_jt}dt_1\cdots dt_jdt\\
		&=\sum_{j=0}^{r-1}\frac{(-1)^j}{(r-j-1)!}\apoly{1}{z}^{r-j-1}\apoly{\{1\}_j,k}{z}.
	\end{align*}
	By putting $z=\tanh{\left(\frac{t}{2}+\frac{\pi}{4}i\right)}$ and substituting this to \eqref{lambda}, we obtain \eqref{me1}. For \eqref{me2}, we have
	\begin{align*}
		\sum_{j=0}^{r-1}\frac{(-1)^j\apoly{1}{z}^{r-1-j}}{(r-j-1)!}\sum_{\substack{\mm{wt}(\Bbbk)=k+j\\ \mm{dep}(\Bbbk)=j+1}}^{} \apoly{\Bbbk}{z}&=\sum_{j=0}^{r-1}\frac{(-1)^j\apoly{1}{z}^{r-1-j}}{(r-j-1)!}
		\sum_{l=0}^{j}\frac{(-1)^l\apoly{1}{z}^{j-l}}{(j-l)!}\apoly{\{1\}_l,k}{z}\\
		&=\sum_{l=0}^{r-1}(-1)^l\apoly{1}{z}^{r-l-1}\apoly{\{1\}_l,k}{z}\sum_{j=l}^{r-1}\frac{(-1)^j}{(r-j-1)!(j-l)!}\\
		&=\sum_{l=0}^{r-1}\frac{(-1)^l}{(r-l-1)!}\apoly{1}{z}^{r-l-1}\apoly{\{1\}_l,k}{z}\sum_{j=l}^{r-1}(-1)^j\binom{r-l-1}{j-l}.\\
	\end{align*}
	Since the last sum is 0 except $l=r-1$, we have
	\[
	\sum_{j=0}^{r-1}\frac{(-1)^j}{(r-j-1)!}\apoly{1}{z}^{r-1-j}\sum_{\substack{\mm{wt}(\Bbbk)=k+j\\ \mm{dep}(\Bbbk)=j+1}}^{} \apoly{\Bbbk}{z}=\apoly{\{1\}_{r-1},k}{z}.
	\]
	By putting $z=\tanh{\left(\frac{t}{2}+\frac{\pi}{4}i\right)}$ and substituting this to \eqref{lambda}, we obtain \eqref{me2}.
\end{proof}

\begin{example}
	For $r=k=2$, we have 
	\begin{align*}
		(LHS~\mbox{of}~(\ref{me1}))&=\lambda(1,2;s)+\lambda(2,1;s)\\
		&=i\binom{s+1}{2}\widetilde{T}(1,s+2)-i\widetilde{T}(3,s)-is\widetilde{T}(2)\widetilde{T}(s+1)+i\widetilde{T}(3)\widetilde{T}(s),\\
		(RHS~\mbox{of}~(\ref{me1}))&=s\lambda(2;s+1)-\lambda(1,2;s)\\
		&=i\binom{s+1}{2}\widetilde{T}(1,s+2)-is\widetilde{T}(2)\widetilde{T}(s+1)-i\widetilde{T}(3,s)+i\widetilde{T}(3)\widetilde{T}(s).
	\end{align*}
	Hence we can see that \eqref{me1} holds. Also, by the above equations, we have
	\[
	\lambda(1,2;s)=s\lambda(2;s+1)-\lambda(1,2;s)-\lambda(2,1;s).
	\]
	Hence we can see that \eqref{me2} holds.
\end{example}

\section{Duality type relation}

In this section, we show that $\lambda$ function satisfies a kind of duality relation. To state the theorem, we prepare some notations used in \cite{KY}. For an index $\Bbbk_r=(k_1,\ldots,k_r)\in\Z_{\geq0}^r$ and $j\in\Z_{\geq0}$, we use the following notations:
\begin{align*}
	\overrightarrow{\Bbbk_j}&=(k_1,k_2,\ldots,k_j),~~~\overleftarrow{\Bbbk_j}=(k_r,k_{r-1},\ldots,k_{r+1-j}),~~~\overrightarrow{\Bbbk_0}=\overleftarrow{\Bbbk_0}=\phi.
\end{align*}

Also, for two indices $(k_1,\ldots, k_r)\in\Z_{\geq1}^r$ and $(l_1,\ldots,l_s)\in\Z_{\geq1}^s$, Kaneko and Yamamoto introduced the `circled harmonic product' in \cite{KY} as follows:
\begin{align}\label{chs}
\zeta\left((k_1,\ldots,k_r)~\textcircled{$\ast$}~(l_1,\ldots,l_s)\right)=\zeta\left((k_1,\ldots,k_{r-1})\ast(l_1,\ldots,l_{s-1}),k_r+l_s\right).
\end{align}

By using a level two analogue of \eqref{chs}, Pallewatta and Xu shows the following relations.

\begin{theorem}[{\cite[Theorem 3.4]{PX},\cite[Theorem 3.3]{Xu}}]
	For $p, q, r\in\Z_{\geq1}$ and $\Bbbk_r=(k_1,\ldots,k_r)\in\Z_{\geq2}^r$, we have
    \begin{align*}
    	\xi&(\{1\}_{q-1},(\overrightarrow{\Bbbk_{r}})_{-};p+1)-(-1)^{\mm{wt}(\Bbbk_r)}\xi(\{1\}_{p-1},(\overleftarrow{\Bbbk_{r}})_{-};q+1)\\
    	&=\sum_{j=0}^{r-1}(-1)^{\mm{wt}(\overleftarrow{\Bbbk_{j}})}\sum_{i=1}^{k_{r-j}-2}(-1)^{i-1}\zeta(\{1\}_{p-1},\overleftarrow{\Bbbk_{j}},i+1)\zeta(\{1\}_{q-1},\overrightarrow{\Bbbk_{r-j-1}},k_{r-j}-i)\\
    	&+\sum_{j=0}^{r-2}(-1)^{\mm{wt}(\overleftarrow{\Bbbk_{j+1}})}\left\{\zeta(\{1\}_{q-1},\overrightarrow{\Bbbk_{r-j-1}})\xi(\{1\}_{p-1},(\overleftarrow{\Bbbk_{j+1}})_{-};2)-\zeta(\{1\}_{p-1},\overleftarrow{\Bbbk_{j+1}})\xi(\{1\}_{q-1},(\overrightarrow{\Bbbk_{r-1-j}})_{-};2)\right\},\\
    	\psi&(\{1\}_{q-1},(\overrightarrow{\Bbbk_{r}})_{-};p+1)-(-1)^{\mm{wt}(\Bbbk_r)}\psi(\{1\}_{p-1},(\overleftarrow{\Bbbk_{r}})_{-};q+1)\\
    	&=\sum_{j=0}^{r-1}(-1)^{\mm{wt}(\overleftarrow{\Bbbk_{j}})}\sum_{i=1}^{k_{r-j}-2}(-1)^{i-1}T(\{1\}_{p-1},\overleftarrow{\Bbbk_{j}},i+1)T(\{1\}_{q-1},\overrightarrow{\Bbbk_{r-j-1}},k_{r-j}-i)\\
    	&+\sum_{j=0}^{r-2}(-1)^{\mm{wt}(\overleftarrow{\Bbbk_{j+1}})}\left\{T(\{1\}_{q-1},\overrightarrow{\Bbbk_{r-j-1}})T\left(\left(\{1\}_{p-1},\overleftarrow{\Bbbk_{j+1}}\right)_{-}\textup{\textcircled{$\ast$}}~(1,1)\right)\right.\\
    	&\twen~~~~-T(\{1\}_{p-1},\overleftarrow{\Bbbk_{j+1}})T\left(\left(\{1\}_{q-1},\overrightarrow{\Bbbk_{r-j-1}}\right)_{-}\textup{\textcircled{$\ast$}}~(1,1)\right)\\
    	&\left.\twen~~~+2\delta_{p+j,q+r-j-2}\log{(2)}T(\{1\}_{p-1},\overleftarrow{\Bbbk_{j+1}})T(\{1\}_{q-1},\overrightarrow{\Bbbk_{r-j-1}})\right\}.
    \end{align*}
    Here, $T(\Bbbk~\textup{\textcircled{$\ast$}}~(1,1))$ is a level two analogue of \eqref{chs} ( called the convoluted $T$-values ) defined in \cite{CZ}, and $\delta_{r,s}$ is defined by
    \begin{align*}
    	\delta_{r,s}=\begin{cases}
    		0~&(r\equiv s\bmod{2}),\\
    		-1~&(r:even, s:odd),\\
    		1~&(r:odd, s:even)
    	\end{cases}
    \end{align*}
    for for $r,s\in\Z_{\geq1}$. $($For details of convoluted $T$-values, see \cite{PX,CZ}.$)$
\end{theorem}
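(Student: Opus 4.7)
My plan is to prove both identities uniformly by a Kaneko--Yamamoto style double-integral argument, reducing the duality to the symmetry of a single auxiliary two-variable integral. Writing $\mm{Li}(\Bbbk;z)$ as an iterated integral in the kernels $\omega_0=du/u$, $\omega_1=du/(1-u)$ (and $\mm{A}(\Bbbk;z)$ in the corresponding level-two kernels), and using the identity
$$\mm{Li}(\{1\}_{q-1},\Bbbl;z)=\frac{1}{(q-1)!}\int_0^z(-\log(1-u))^{q-1}\,d\mm{Li}(\Bbbl;u)$$
to absorb the prefix $\{1\}_{q-1}$, I would recast both sides of each identity as $(\mm{wt}(\Bbbk_r)+p+q-1)$-fold iterated integrals over a simplex in $(0,1)$.

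The first step is to introduce the symmetric auxiliary
$$\mathcal{I}(p,q;\Bbbk_r)=\frac{1}{p!\,q!}\int_0^\infty\!\!\int_0^\infty\frac{t^p u^q}{(e^t-1)(e^u-1)}\,G_{\Bbbk_r}(1-e^{-t},1-e^{-u})\,dt\,du,$$
where $G_{\Bbbk_r}(a,b)$ is the iterated integral on $(b,a)$ encoding the ``interior'' word attached to $\Bbbk_r$. Reading $G_{\Bbbk_r}$ from $b$ to $a$ expands $\mathcal{I}$ as $\xi(\{1\}_{q-1},(\overrightarrow{\Bbbk_r})_-;p+1)$, while reading it in the opposite direction (which contributes the overall sign $(-1)^{\mm{wt}(\Bbbk_r)}$) expands it as $\xi(\{1\}_{p-1},(\overleftarrow{\Bbbk_r})_-;q+1)$. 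The second step is to compute the discrepancy between the two expansions by iterated integration by parts inside $G_{\Bbbk_r}$: pushing an $\omega_0$ across the innermost $\omega_1$ splits the block $k_{r-j}-1=i+(k_{r-j}-1-i)$ and outputs an interior boundary term of the shape $\zeta(\{1\}_{p-1},\overleftarrow{\Bbbk_j},i+1)\,\zeta(\{1\}_{q-1},\overrightarrow{\Bbbk_{r-j-1}},k_{r-j}-i)$, whereas reaching the end of a block produces an endpoint contribution $\zeta(\{1\}_{q-1},\overrightarrow{\Bbbk_{r-j-1}})\,\xi(\{1\}_{p-1},(\overleftarrow{\Bbbk_{j+1}})_-;2)$ together with its mirror. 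Summing over $j$ yields the $\xi$-identity, with the alternating signs $(-1)^{\mm{wt}(\overleftarrow{\Bbbk_j})}$ arising from the reversal inherent in each step.

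For the $\psi$ case I would run the same argument with the level-two kernels $\omega_0=du/u$, $\omega_\pm=2\,du/(1\pm u)$ and the measure $(\sinh t)^{-1}dt$ in place of $(e^t-1)^{-1}dt$. The essential new ingredient is that $\mm{A}(1;z)=\log\frac{1+z}{1-z}$ is divergent at $z=1$, so the endpoint boundary terms corresponding to a non-admissible $(1,1)$ tail have to be shuffle-regularized; this replaces ordinary products of divergent $T$-values by the convoluted values $T(\,\cdot\,\textup{\textcircled{$\ast$}}\,(1,1))$ of \cite{CZ}, and the $\log 2$ that is subtracted to make those products finite aggregates into the $\delta_{p+j,q+r-j-2}\log 2$ correction.

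I expect the main obstacle to be precisely this regularization in the $\psi$ case: verifying that the leftover $\log 2$ constants reorganize into exactly the parity pattern encoded by $\delta_{r,s}$. I would handle it by $\varepsilon$-truncating the inner integral at $u=1-\varepsilon$, expanding the divergent part of $\mm{A}(1;z)$ near $z=1$ as $\log(2/\varepsilon)+O(\varepsilon)$, and then checking that the $\log\varepsilon$ pieces cancel between the two directions of expansion while the surviving $\log 2$ carries precisely the parity-dependent sign prescribed by the definition of $\delta_{r,s}$.
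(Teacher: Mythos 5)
You should note that the paper does not actually prove this theorem: it is quoted from \cite{PX} and \cite{Xu}, and the paper only records that the original proof rests on the Abel-summation lemma reproduced right after the statement (partial sums $A_n,B_n$, harmonic numbers $H_{n-1}(\alpha)$, and the digamma difference $\psi(\beta+1)-\psi(\alpha+1)$). Your route is genuinely different from that one, but it is essentially the route the paper itself takes for the $\lambda$-analogue in Theorem 3.3: rewrite the Arakawa--Kaneko integral as a single integral of a product of two iterated integrals (a Kaneko--Yamamoto/Yamamoto-type representation), transfer the differential forms from one factor to the other by repeated integration by parts, and collect the boundary terms; the sign $(-1)^{\mm{wt}(\Bbbk_r)}$ accumulates one factor of $-1$ per step rather than coming from a single global reversal, but that is the same bookkeeping. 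What the series approach buys is that the entire $\psi$-case correction term $2\delta_{p+j,q+r-j-2}\log(2)\,T(\cdots)T(\cdots)$ falls out immediately from $\psi(\beta+1)-\psi(\alpha+1)$ with $\alpha,\beta$ ranging over integers and half-integers (the difference contains $2\log 2$ exactly when the two parameters lie in different parity classes, which is the content of $\delta_{r,s}$), and the convoluted values $T(\,\cdot\,\textup{\textcircled{$\ast$}}\,(1,1))$ arise from the $H_{n-1}$-sums without any regularization. What your approach buys is uniformity with the shuffle structure and a direct path to the level-four case. The one place where your sketch is thinner than it should be is precisely the step you flag: in the integral picture the constant term of $\mm{A}(\{1\}_{q-1},\overrightarrow{\Bbbk_{r-j-1}},1;z)$ as $z\to 1$ is not just $\log 2$ times the leading $T$-value --- it is the full shuffle-regularized constant, whose identification with the convoluted $T$-value plus the parity-dependent $\log 2$ is itself a nontrivial result of \cite{CZ}; your $\varepsilon$-truncation plan would need to invoke or reprove that identification, and until that is pinned down the $\psi$-identity is not yet established. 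For the $\xi$-identity your argument is complete in outline.
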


The proof of this theorem is based on the following lemma.

\begin{lemma}[{\cite[Lemma 3.5]{PX}}]
	Let sequences $A_{n}, B_n$ be the finite sums
	\[
	A_n=\sum_{k=1}^{n}a_k,~~B_n=\sum_{k=1}^{n}b_k\five(a_n, b_n=o(n^{-p}), \mm{Re}(p)>1)
	\]
	and $A=\lim_{n\rightarrow \infty} A_n, B=\lim_{n\rightarrow\infty}B_n$. Also, we define 
	\[
	H_n(\alpha)=\sum_{k=1}^{n}\frac{1}{k+\alpha}.
	\]
	Then we have
	\[
	\sum_{n=1}^{\infty}\left\{\frac{A_nB}{n+\alpha}-\frac{AB_n}{n+\beta}\right\}=AB(\psi(\beta+1)-\psi(\alpha+1))+A\sum_{n=1}^{\infty} b_nH_{n-1}(\beta)-B\sum_{n=1}^{\infty} a_nH_{n-1}(\alpha),
	\] 
	where $\psi(\alpha+1)$ is the digamma function and $\alpha,\beta\in\Co\setminus\Z_{<0}$.
\end{lemma}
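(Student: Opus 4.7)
The plan is to regroup the summand so that the individually divergent pieces of $\sum \frac{A_n B}{n+\alpha}$ and $\sum \frac{A B_n}{n+\beta}$ cancel against each other, leaving three genuinely convergent subsums that can be evaluated separately. First I would use the decomposition
\[
\frac{A_nB}{n+\alpha}-\frac{AB_n}{n+\beta} = \frac{(A_n-A)B}{n+\alpha} - \frac{A(B_n-B)}{n+\beta} + AB\left(\frac{1}{n+\alpha}-\frac{1}{n+\beta}\right).
\]
Summing the last piece term by term yields $AB\bigl(\psi(\beta+1)-\psi(\alpha+1)\bigr)$ by the standard series representation of the digamma function. The first two pieces are summable in absolute value, because $A_n-A = -\sum_{k>n} a_k = O(n^{1-\mm{Re}(p)})$ and analogously for $B_n-B$, using the hypothesis $a_n,b_n = o(n^{-p})$ with $\mm{Re}(p)>1$.

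Next I would convert the $A_n-A$ tail sum into a sum involving $H_{n-1}(\alpha)$. Writing $A_n-A=-\sum_{k=n+1}^\infty a_k$ and interchanging the order of summation (justified by absolute convergence) gives
\[
\sum_{n=1}^\infty \frac{A_n-A}{n+\alpha} = -\sum_{k=2}^\infty a_k \sum_{n=1}^{k-1}\frac{1}{n+\alpha} = -\sum_{k=1}^\infty a_k H_{k-1}(\alpha),
\]
where the $k=1$ term is absorbed because $H_0(\alpha)=0$. The analogous manipulation for the $B_n-B$ piece, multiplied by the appropriate constants $A$ and $B$ and combined with the digamma contribution, rearranges exactly to the right-hand side of the lemma.

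The main obstacle is essentially only bookkeeping: verifying that the rearrangement splits what is a priori a conditionally convergent series into three absolutely convergent ones, and that Fubini applies to the double sum arising in the swap. Both facts reduce to the estimate $|a_k H_{k-1}(\alpha)| = O(k^{-\mm{Re}(p)}\log k)$ together with the corresponding estimate for $b_k$, which follow immediately from the decay hypotheses. Once convergence is in hand, the identity is a one-line rearrangement.
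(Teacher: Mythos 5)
Your argument is correct and complete. Note, however, that the paper does not prove this lemma at all: it is quoted verbatim from \cite[Lemma 3.5]{PX} purely to explain how Pallewatta and Xu derived their duality theorem, and the author then explicitly sidesteps it (``Instead of using this lemma, by considering shuffle normalization of polylogarithm, \dots''), so there is no in-paper proof to compare against. On its own terms your proof is the natural one: the three-term regrouping is an exact identity, the digamma piece is the standard series $\psi(\beta+1)-\psi(\alpha+1)=\sum_{n\geq1}\bigl(\tfrac{1}{n+\alpha}-\tfrac{1}{n+\beta}\bigr)$, the tail estimates $A_n-A=o(n^{1-\mm{Re}(p)})$ make the first two pieces absolutely convergent, and the interchange $\sum_{n}\tfrac{1}{n+\alpha}\sum_{k>n}a_k=\sum_{k}a_kH_{k-1}(\alpha)$ is justified by Tonelli with the bound $o(k^{-\mm{Re}(p)}\log k)$; the $H_0(\alpha)=0$ bookkeeping is also right. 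The only cosmetic quibble is that all three subsums (not just two) are absolutely convergent, the third being $O(n^{-2})$, which is what legitimizes the term-by-term rearrangement of the original conditionally assembled series.
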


Instead of using this lemma, by considering shuffle normalization of polylogarithm, we have a duality type relation for $\lambda$ function.

\begin{theorem}\label{lamdual}
	For $p, q, r\in\Z_{\geq1}$ and $\Bbbk_r=(k_1,\ldots,k_r)\in\Z_{\geq2}^r$, we have
	\begin{equation}
	\begin{split}\label{formulalamdual}
		\lambda&\left(\{1\}_{q-1},(\overrightarrow{\Bbbk_{r}})_{-};p+1\right)-(-1)^{\mm{wt}(\Bbbk_r)}\lambda\left(\{1\}_{p-1},(\overleftarrow{\Bbbk_{r}})_{-};q+1\right)\\
		&=i^{\mm{dep}(\Bbbk_r)-\mm{wt}(\Bbbk_r)+1}\sum_{j=0}^{r-1}(-1)^{\mm{wt}(\overleftarrow{\Bbbk_j})}\sum_{l=1}^{k_{r-j}-2}(-1)^{l-1}\widetilde{T}\left(\left(\{1\}_{p-1},\overleftarrow{\Bbbk_{j}},l+1\right)^{\dagger}\right)\widetilde{T}\left(\left(\{1\}_{q-1},\overrightarrow{\Bbbk_{r-j-1}},k_{r-j}-l\right)^{\dagger}\right)\\
		&+i^{\mm{dep}(\Bbbk_r)-\mm{wt}(\Bbbk_r)+1}\sum_{j=0}^{r-2}(-1)^{\mm{wt}(\overleftarrow{\Bbbk_{j+1}})}\left\{\widetilde{T}\left(\left(\{1\}_{q-1},\overrightarrow{\Bbbk_{r-j-1}}\right)^{\dagger}\right)\widetilde{T}\left(\left(\left(\{1\}_{p-1},\overleftarrow{\Bbbk_{j+1}}\right)_{-}\textup{\textcircled{$\ast$}}~(1,1)\right)^{\dagger}\right)\right.\\
		&\left.\twen-\widetilde{T}\left(\left(\{1\}_{p-1},\overleftarrow{\Bbbk_{j+1}}\right)^{\dagger}\right)\widetilde{T}\left(\left(\left(\{1\}_{q-1},\overrightarrow{\Bbbk_{r-j-1}}\right)_{-}\textup{\textcircled{$\ast$}}~(1,1)\right)^{\dagger}\right)\right\}.
	\end{split}
    \end{equation}
    Here, $\Bbbk^\dagger$ is the dual index of $\Bbbk$ and 
    \[
    \widetilde{T}\left(\left(\Bbbk_r\right)_{-}\textup{\textcircled{$\ast$}}~(1,1)\right)=\sum_{j=1}^{r-1}\sum_{\substack{k_{j,1}+k_{j,2}=k_j+1\\k_{j,1},k_{j,2}\geq1, k_{1,1}\geq2}}^{}\widetilde{T}(k_1,\ldots, k_{j-1},k_{j,1},k_{j,2},k_{j+1}\ldots,k_{r}).
    \]
\end{theorem}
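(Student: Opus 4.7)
The plan is to avoid the series-theoretic machinery of \cite[Lemma 3.5]{PX} and work instead at the level of the iterated-integral representation \eqref{aitel} of $\apoly{\Bbbk_r}{z}$, combined with shuffle regularization. The central observation is that in the second form of \eqref{aitel} the two differential forms $du/u$ and $2du/(1-u^2)$ play symmetric roles: the involution $u \mapsto (1-u)/(1+u)$ sends $du/u$ to $-2dv/(1-v^2)$ and $2du/(1-u^2)$ to $-dv/v$. Applied to the iterated integral word for $\apoly{\Bbbk_r}{z}$ (and combined with a reversal of the order of integration), this turns a polylog indexed by $\Bbbk_r$ into one indexed by the dual $\Bbbk_r^{\dagger}$. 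This is the mechanism producing the dual indices on the right hand side of \eqref{formulalamdual}.

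Concretely, I would first absorb the factor $t^p/p!$ in
\[
\lambda(\{1\}_{q-1},(\overrightarrow{\Bbbk_r})_-;p+1) = \frac{1}{p!}\int_0^\infty t^p \frac{\apoly{\{1\}_{q-1},(\overrightarrow{\Bbbk_r})_-}{\tanh(t/2+\pi i/4)}}{\cosh t}\,dt
\]
into the polylog as $p$ additional $\apoly{1}{\cdot}$ factors using Lemma \ref{1apoly}, so that the integrand becomes an iterated integral of total weight $p+q-1+\mm{wt}((\overrightarrow{\Bbbk_r})_-)$; the same is done for the other $\lambda$-value on the left of \eqref{formulalamdual}. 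Next, I would apply the involution described above to one of the two resulting iterated integrals, so that up to admissible cancellations it has the same integrand as the other. The difference between the two sides of \eqref{formulalamdual} then becomes a sum of boundary contributions from the non-admissible ends of the relevant words, which I would resolve using shuffle regularization exactly as in the proof of Lemma \ref{expliapoly}: a trailing $1$ is rewritten as $\apoly{1}{z}$ times admissible polylogs, and a trailing $1,1$ produces exactly the convoluted-product structure $(\cdot)_{-}\,\textup{\textcircled{$\ast$}}\,(1,1)$. Substituting $z=\tanh(t/2+\pi i/4)$, integrating against $(\cosh t)^{-1}$ and applying Lemma \ref{inttilt} to convert the remaining integrals to $\widetilde{T}$-values then yields the right hand side of \eqref{formulalamdual}.

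The main obstacle is the careful bookkeeping of signs and powers of $i$, in particular the global factor $i^{\mm{dep}(\Bbbk_r)-\mm{wt}(\Bbbk_r)+1}$ and the alternating $(-1)^{\mm{wt}(\overleftarrow{\Bbbk_{j+1}})}$. Each contribution has to be tracked through three places: the involution $u \mapsto (1-u)/(1+u)$, which introduces a sign for each swapped form; the shuffle regularization of non-admissible tails, whose coefficients already come with their own signs (compare the calculation in Lemma \ref{expliapoly}); and the final application of Lemma \ref{inttilt}, which contributes its own factor $i^{-r+1}$. A pleasant consequence of working with the basepoint $i$, where $\apoly{1}{i}=0$, is that no $\log 2$-correction analogous to the $\delta_{p+j,q+r-j-2}\log(2)$ term of the Pallewatta--Xu $\psi$-formula can appear, which is precisely why \eqref{formulalamdual} is cleaner than its $\psi$-counterpart in \cite{PX}.
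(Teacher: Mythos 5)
Your framework is right in outline --- absorbing $t^{p}/p!$ via Lemma \ref{1apoly}, working with iterated integrals based at $i$ (so that the lower boundary contributes nothing and no $\log 2$ corrections appear), and using shuffle regularization for non-admissible tails all match what is actually needed --- but the engine of the proof is missing. The involution $u\mapsto(1-u)/(1+u)$ does not preserve the integration path from $i$ to $1$ (it sends $1\mapsto 0$ and $i\mapsto -i$), so even combined with order reversal it converts $i\int_{i}^{1}\apoly{\{1\}_{p}}{z}\apoly{\{1\}_{q-1},(\overrightarrow{\Bbbk_{r}})_{-}}{z}\frac{dz}{z}$ into an $\mm{A}$-type integral over a different path, not into the analogous $\apoly{\cdot}{z}$-integral for the reversed index; and in any case a change of variables is exact and produces no boundary terms, so your step ``the difference then becomes a sum of boundary contributions'' has no mechanism behind it. What actually relates the two $\lambda$-values is a telescoping integration by parts of the integrand: repeatedly applying $F\,dG=d(FG)-G\,dF$ transfers the letters of the second factor (together with the final $dz/z$) one at a time onto the factor $\apoly{\{1\}_{p}}{z}$, until the roles of $p$ and $q$ are exchanged and the index is reversed, while the exact derivatives integrate to the products $\apoly{A}{1}\apoly{B}{1}$ over all intermediate splittings $(A,B)$ of the word. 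This is the only source of the double products on the right-hand side of \eqref{formulalamdual}; note that the entire inner sum over $1\le l\le k_{r-j}-2$ arises at \emph{admissible} splittings and has nothing to do with regularization, so it cannot be recovered from ``non-admissible ends of the relevant words''. Only the splittings at depth boundaries produce trailing $1$'s, and those are the terms you correctly propose to handle by shuffle regularization, yielding the $\textup{\textcircled{$\ast$}}~(1,1)$ structure.

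The involution does enter, but only at the last step and only through the citation of \cite[Proposition 3.2]{KT1}: $\apoly{A}{1}=i^{\mm{dep}(A)-\mm{wt}(A)}\widetilde{T}(A^{\dagger})$, which is what puts the daggers and the global power of $i$ into the boundary products. Accordingly the finishing tool is this evaluation at $z=1$, not Lemma \ref{inttilt}, which converts Mellin integrals of $\mm{A}(\Bbbk;ie^{-t})$ into $\widetilde{T}$-values and never produces a dual index. If you insist on keeping the involution as the central mechanism, you would have to work in Yamamoto's $2$-poset formalism: dualizing the poset turns the ``join at the top'' into a ``join at the bottom'', and decomposing the latter back into admissible posets is exactly the combinatorial shadow of the integration by parts your sketch omits. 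Either way, that telescoping step has to appear explicitly before the sign and $i$-power bookkeeping you describe can even begin.
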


 \begin{proof}
	We use the variable change similar to \cite{Ya}. By changing variable $\tanh{\left(\frac{t}{2}+\frac{\pi}{4}i\right)}=z$ in \eqref{lambda} and putting $s=p+1$, we have
	\begin{align*}
		\lambda((\{1\}_{q-1},\overrightarrow{\Bbbk_r})_{-};p+1)&=i\int_{i}^{1} \frac{\apoly{\{1\}_{p}}{z}\apoly{(\{1\}_{q-1},\overrightarrow{\Bbbk_r})_{-}}{z}}{z}dz.
	\end{align*}
	For the integrand, by integrating by parts, we have
	\begin{align*}
		i&\frac{\apoly{\{1\}_{p}}{z}\apoly{(\{1\}_{q-1},\overrightarrow{\Bbbk_r})_{-}}{z}}{z}\\
		&\five=i\sum_{j=0}^{r-1}(-1)^{\mm{wt}(\overleftarrow{\Bbbk_j})}\sum_{l=1}^{k_{r-j}-2}(-1)^{l-1}\dv{}{z}\left\{\apoly{\{1\}_{p-1},\overleftarrow{{\Bbbk_{j}}},l+1}{z}\apoly{\{1\}_{q-1},\overrightarrow{{\Bbbk_{r-j-1}}},k_{r-j}-l}{z}\right\}\\
		&+i\sum_{j=0}^{r-2}(-1)^{\mm{wt}(\overleftarrow{\Bbbk_j})}\dv{}{z}\left\{\apoly{\{1\}_{p-1},\overleftarrow{\Bbbk_{j+1}}}{z}\apoly{\{1\}_{q-1},\overrightarrow{\Bbbk_{r-j-1}},1}{z}\right.\\
		&\twen\ten \left.-\apoly{\{1\}_{p-1},\overleftarrow{\Bbbk_{j+1}},1}{z}\apoly{\{1\}_{q-1},\overrightarrow{\Bbbk_{r-j-1}}}{z}\right\}\\
		&+i(-1)^{\mm{wt}(\overleftarrow{\Bbbk_r})}\frac{\apoly{\{1\}_{q}}{z}\apoly{(\{1\}_{p-1},\overleftarrow{\Bbbk_r})_{-}}{z}}{z}.
	\end{align*}
	For the second sum, by normalization of $\mathscr{A}$, we have
	\begin{align*}
		&\apoly{\{1\}_{p-1},\overleftarrow{\Bbbk_{j+1}}}{z}\apoly{\{1\}_{q-1},\overrightarrow{\Bbbk_{r-j-1}},1}{z}-\apoly{\{1\}_{p-1},\overleftarrow{\Bbbk_{j+1}},1}{z}\apoly{\{1\}_{q-1},\overrightarrow{\Bbbk_{r-j-1}}}{z}\\
		&=\apoly{\{1\}_{p-1},\overleftarrow{\Bbbk_{j+1}}}{z}\apoly{\{1\}_{q-1},\overrightarrow{\Bbbk_{r-j-1}}}{z}\apoly{1}{z}\\
		&\five-\apoly{\{1\}_{p-1},\overleftarrow{\Bbbk_{j+1}}}{z}\apoly{\left(\{1\}_{q-1},\overrightarrow{\Bbbk_{r-j-1}}\right)_{-}\textcircled{$\ast$}~(1,1)}{z}\\
		&\five-\apoly{\{1\}_{p-1},\overleftarrow{\Bbbk_{j+1}}}{z}\apoly{\{1\}_{q-1},\overrightarrow{\Bbbk_{r-j-1}}}{z}\apoly{1}{z}\\
		&\five+\apoly{\left(\{1\}_{p-1},\overrightarrow{\Bbbk_{j+1}}\right)_{-}\textcircled{$\ast$}~(1,1)}{z}\apoly{\{1\}_{q-1},\overleftarrow{\Bbbk_{r-j-1}}}{z}\\
		&=\apoly{\left(\{1\}_{p-1},\overleftarrow{\Bbbk_{j+1}}\right)_{-}\textcircled{$\ast$}~(1,1)}{z}\apoly{\{1\}_{q-1},\overrightarrow{\Bbbk_{r-j-1}}}{z}\\
		&\five-\apoly{\{1\}_{p-1},\overleftarrow{\Bbbk_{j+1}}}{z}\apoly{\left(\{1\}_{q-1},{\overrightarrow{\Bbbk_{r-j-1}}}_{-}\right)\textcircled{$\ast$}~(1,1)}{z}.
	\end{align*}
    By \cite[Proposition 3.2]{KT1}, we have
	\[
	\apoly{\Bbbk_r}{1}=i^{\mm{dep}(\Bbbk_r)-\mm{wt}(\Bbbk_r)}\widetilde{T}((\Bbbk_r)^{\dagger}).
	\]
	Hence the theorem follows by integrating with respect to z from $i$ to $1$.
\end{proof}

\begin{remark}
	As the same manner, we obtain
	\begin{align*}
		\psi&\left(\{1\}_{q-1},(\overrightarrow{\Bbbk_{r}})_{-};p+1\right)-(-1)^{\mm{wt}(\Bbbk_r)}\psi\left(\{1\}_{p-1},(\overleftarrow{\Bbbk_{r}})_{-};q+1\right)\\
		&=\sum_{j=0}^{r-1}(-1)^{\mm{wt}(\overleftarrow{\Bbbk_j})}\sum_{l=1}^{k_{r-j}-2}(-1)^{l-1}T(\{1\}_{p-1},\overleftarrow{\Bbbk_{j}},l+1)T(\{1\}_{q-1},\overrightarrow{\Bbbk_{r-j-1}},k_{r-j}-l)\\
		&+\sum_{j=0}^{r-2}(-1)^{\mm{wt}(\overleftarrow{\Bbbk_{j+1}})}\left\{T\left(\{1\}_{q-1},\overrightarrow{\Bbbk_{r-j-1}}\right)T\left(\left(\{1\}_{p-1},\overleftarrow{\Bbbk_{j+1}}\right)_{-}\textup{\textcircled{$\ast$}}~(1,1)\right)\right.\\
		&\left.\twen-T\left(\{1\}_{p-1},\overleftarrow{\Bbbk_{j+1}}\right)T\left(\left(\{1\}_{q-1},\overrightarrow{\Bbbk_{r-j-1}}\right)_{-} \textup{\textcircled{$\ast$}}~(1,1)\right)\right\}.
	\end{align*}
\end{remark}

\begin{example}
	For $\Bbbk_r=(2,2), p=1$ and $q=2$, by Theorem \ref{12lam}, we have
	\begin{align*}
		\mbox{$($LHS of \eqref{formulalamdual} $)$} &= \lambda(1,2,1;2)-\lambda(2,1;3)\\
		&=2i\widetilde{T}(3,3)+3i\widetilde{T}(4,2).
	\end{align*}
	On the other hand, a direct calculation shows that
	\begin{align*}
		\mbox{$($RHS of \eqref{formulalamdual} $)$} &= -i\widetilde{T}((1)~\textup{\textcircled{$\ast$}}~(1,1))^{\dagger})\widetilde{T}(3)+i\widetilde{T}(2)\widetilde{T}(((1,1)~\textup{\textcircled{$\ast$}}~(1,1))^{\dagger})\\
		&=2i\tilde{T}(3,3)+3i\tilde{T}(4,2).
	\end{align*}
	Hence we can see that Theorem \ref{lamdual} holds.
\end{example}

\section*{Acknowledgments}

The author would like to thank my supervisor Professor Hirofumi Tsumura for his kind advice and helpful comments. This work was supported by JST, the establishment of university fellowships towards the creation of science technology innovation, Grant Number JPMJFS2139.

	\begin{bibdiv}
		\begin{biblist}
		    \bib{AK}{article}{
		    	author={T. Arakawa},author={M. Kaneko},
		    	title={Multiple zeta values, poly-Bernoulli numbers, and related zeta functions},
		    	journal={Nagoya Math. J.},
		    	volume={153},
		    	date={1999},
		    	number={},
		    	pages={189--209},
		    	issn={},
		    }
	     \bib{AK2}{article}{
	    	author={T. Arakawa},author={M. Kaneko},
	    	title={On multiple $L$-values},
	    	journal={J. Math. Soc. Japan},
	    	volume={56},
	    	date={2004},
	    	number={4},
	    	pages={967--991},
	    	issn={},
	    }
	        \bib{Hof1}{article}{
	        	author={M. E. Hoffman},
	        	title={An odd variant of multiple zeta values},
	        	journal={Commun. Number Theory Phys.},
	        	volume={13},
	        	date={2019},
	        	number={3},
	        	pages={529--567},
	        	issn={},
	        }
		    \bib{KT2}{article}{
		    	author={M. Kaneko},author={H. Tsumura},
		    	title={On multiple zeta values of level two},
		    	journal={Tsukuba J. Math.},
		    	volume={44},
		    	date={2020},
		    	number={},
		    	pages={213-234},
		    	issn={},
		    }
			\bib{KT3}{article}{
				author={M. Kaneko},author={H. Tsumura},
				title={Zeta functions connectiong multiple zeta values and poly-Bernoulli numbers},
				journal={Adv. Stud. Pure Math.},
				volume={84},
				date={2020},
				number={},
				pages={181--204},
				issn={},
			}
					\bib{KT1}{article}{
			author={M. Kaneko},author={H. Tsumura},
			title={Multiple $L$-values of level four, poly-Euler numbers, and related zeta function},
			journal={to appear in Tohoku Math. J. arXiv:2208.05146},
			volume={},
			date={},
			number={},
			pages={},
			issn={},
		}
	\bib{KY}{article}{
	AUTHOR = {M. Kaneko},author={S. Yamamoto},
	TITLE = {A new integral-series identity of multiple zeta values and
		regularizations},
	JOURNAL = {Selecta Math.},
	VOLUME = {24},
	YEAR = {2018},
	NUMBER = {3},
	PAGES = {2499--2521},	
}
			\bib{PX}{article}{
				author={M. Pallewatta},author={C. Xu},
				title={Some results on Arakawa-Kaneko, Kaneko-Tsumura functions and related functions},
				journal={arXiv:1901.07877},
				volume={},
				date={2019},
				number={},
				pages={},
				issn={},
			}
			\bib{Xu}{article}{
				author={C. Xu},
				title={Duality formulas for Arakawa-Kaneko zeta values and related variants},
				journal={Bull. Malays. Math. Sci. Soc.},
				volume={},
				date={2021},
				number={},
				pages={3001--3018},
				issn={},
			}
				\bib{CZ}{article}{
					author={C. Xu},author={J. Zhao},
					title={Variants of multiple zeta values with even and odd summation
						indices},
					journal={Math. Z.},
					volume={300},
					date={2022},
					number={3},
					pages={3109--3142},
					issn={},
			}
		\bib{Ya}{article}{
		AUTHOR = {S. Yamamoto},
		TITLE = {Multiple zeta functions of {K}aneko-{T}sumura type and their
			values at positive integers},
		JOURNAL = {Kyushu J. Math.},
		VOLUME = {76},
		YEAR = {2022},
		NUMBER = {2},
		PAGES = {497--509},
	}
		\end{biblist}
	\end{bibdiv}
	
\end{document}